\documentclass[11pt]{article}

\usepackage{geometry}
\usepackage{graphicx}
\usepackage{amsmath, amssymb, amsfonts, amsthm, float}  
\usepackage{enumerate, color, framed, float, multirow}
\usepackage{comment, longtable, caption, subcaption, appendix}
\usepackage[sort,longnamesfirst]{natbib}

\binoppenalty=\maxdimen
\relpenalty=\maxdimen


\setlength{\baselineskip}{0.5in} \setlength{\parskip}{.05in}

\allowdisplaybreaks


\setlength{\tabcolsep}{5pt}


\newcommand{\ds}{\displaystyle}
\newcommand{\E}{\text{E}}

\newcommand{\X}{\mathsf{X}}
\newcommand{\B}{\mathcal{B}}
\newcommand{\real}{{\mathbb R}}

\newcommand\numberthis{\addtocounter{equation}{1}\tag{\theequation}}

\newtheorem{theorem}{Theorem}

\newtheorem{lemma}{Lemma}

\theoremstyle{remark}

\newtheorem{remark}{Remark}

\title{Geometric Ergodicity of Gibbs Samplers in Bayesian Penalized Regression Models}
\author{Dootika Vats\\
Department of Statistics\\
University of Warwick\\
Coventry, UK\\
\texttt{D.Vats@warwick.ac.uk}\\}
\date{\today}

\begin{document}
\maketitle

\begin{abstract}
We consider three Bayesian penalized regression models and show that the respective deterministic scan Gibbs samplers are geometrically ergodic regardless of the dimension of the regression problem. We prove geometric ergodicity of the Gibbs samplers for the Bayesian fused lasso, the Bayesian group lasso, and the Bayesian sparse group lasso. Geometric ergodicity along with a moment condition results in the existence of a Markov chain central limit theorem for Monte Carlo averages and ensures reliable output analysis. Our results of geometric ergodicity allow us to also provide default starting values for the Gibbs samplers.
\end{abstract}

\section{Introduction} 
\label{sec:introduction}
Let $y \in \real^n$ be the observed realization of the response $Y$, $X$ be the $n \times p$ model matrix, and $\beta \in \real^p$ be  the regression coefficient vector. The goal, generally, is to identify important predictors amongst the $p$ covariates and estimate the corresponding coefficients in $\beta$. However, in many problems, like 
genetics, image processing, chemometrics, economics,  the number of covariates, $p$ can be much larger than $n$, making it difficult to use classical regression techniques. Bayesian and frequentist penalization methods have been found to be very useful in such situations. Consider the Bayesian regression model of the form
\begin{align*}
	Y \mid \beta, \sigma^2 \;\; & \sim \;\; N_n(X \beta, \sigma^2 I_n)\\
	\beta \mid \eta, \sigma^2 \;\; & \sim \;\; N_p(0, \sigma^2 \Sigma_{\eta})\\
	\eta \;\; & \sim \;\; p(\eta)\\
	\sigma^2 \;\; & \sim \;\; \text{Inverse-Gamma}(\alpha, \xi)\,, \numberthis \label{eq:model_general}
\end{align*}
where $\alpha, \xi \geq 0$ are assumed known, $\Sigma_{\eta}$ is a $p \times p$ covariance matrix determined by $\eta \in \real^s_+$, and $p(\eta)$ is a proper prior on $\eta$. Many Bayesian penalized regression and variable selection models can be presented in this framework (see for example \cite{guan:step:2011, kyu:gill:gho:cas:2010, park:cas:2008, yang:jor:wain:2016}).   The resulting posteriors are often intractable and Markov chain Monte Carlo (MCMC) is used to estimate model parameters.

Consider the  Bayesian fused lasso, the Bayesian group lasso \cite{kyu:gill:gho:cas:2010}, and the Bayesian sparse group lasso \cite{xu:gho:2015}, all three of which belong to the family of models in \eqref{eq:model_general}. These models have been used in a variety of problems. The Bayesian group lasso and the Bayesian sparse group lasso find use in medical research \cite{fan:wang:peng:2017, gu:yin:lee:2013, nath:green:lesp:2016, ram:fuc:wild:2010}.  The Bayesian fused lasso has been used in breast cancer research \cite{zhang2014bayesian}. Given the use of these models in medical research, reliable inference is essential. 

Reliable estimation from MCMC output rests heavily on the rate of convergence of the Markov chain. In particular, a geometric rate of convergence lets users appeal to the Markov chain central limit theorem (CLT), allowing for the  estimation of Monte Carlo error in posterior estimates and consistent estimation of effective sample size. We show that the MCMC samplers used in the three models converge to their respective stationary distribution at a geometric rate. That is, we show that the Gibbs samplers are \textit{geometrically ergodic} (formal definitions are in Section \ref{sec:markov_chain_background}).

In the models we study, the full conditionals for $\beta$, $\eta$ and $\sigma^2$ are available in closed form so that it is straightforward to draw samples from $f(\beta \mid \eta, \sigma^2, y), f(\eta \mid \beta, \sigma^2, y)$, and $f(\sigma^2 \mid \beta, \eta, y)$. As a consequence, a three variable deterministic scan Gibbs sampler is implemented to draw approximate samples from the intractable posterior distribution and inference is  done using sample statistics.  The quality of estimation is affected not only by the size of the Monte Carlo sample, but also by the rate of convergence of the Gibbs sampler.  We show that all three Gibbs samplers converge to their respective stationary distribution at a geometric rate under reasonable conditions. Specifically, we only require the number of observations, $n$, to be larger than three and require no assumptions on the number of covariates, $p$ or the model matrix $X$. This geometric rate of convergence allows for reliable estimation of posterior quantities in the following way.

Let $F$ denote the posterior distribution of $(\beta, \eta,\sigma^2)$ obtained from \eqref{eq:model_general}, defined on the space $\X = \real^p \times \real^s_+ \times \real_+$ and let $f(\beta, \eta, \sigma^{2} \mid y)$ be the associated density. 
Let $g:\X \to \real^d$ be an $F$-integrable function, then interest is in estimating
\[\theta := \int_{\X}g(\beta, \eta, \sigma^2) \; f(\beta, \eta, \sigma^2 \mid y) d\beta\, d\eta\, d\sigma^2 < \infty\,. \]
Typically $\theta$ represents means, variance or quantiles of the posterior distribution. For $t = 0, 1, 2, \dots$, let  $(\beta^{(t)}, \eta^{(t)}, \sigma^{2(t)})$ be the samples obtained using a Harris ergodic Gibbs sampler. Then, with probability 1, for every $(\beta^{(0)}, \eta^{(0)}, \sigma^{2(0)}) \in \X$
\[\theta_N := \dfrac{1}{N} \ds \sum_{t=0}^{N-1} g(\beta^{(t)}, \eta^{(t)}, \sigma^{2(t)}) \to \theta \quad \text{ as $N \to \infty$}\,. \]
However, in finite samples there is typically a non-zero Monte Carlo error $\theta_N - \theta$ and an approximate sampling distribution of this error maybe available via a Markov chain CLT. Let $\| \cdot \|$ denote the Euclidean norm. If the deterministic scan Gibbs sampler is geometrically ergodic and 
\[\int_{\X} \left\|g(\beta, \eta, \sigma^2) \right\|^{2 + \delta} f(\beta , \eta, \sigma^2 \mid y) d\beta \; d\eta\; d\sigma^2 < \infty\,, \]
then a Markov chain CLT holds as below:
\begin{equation}
\label{eq:clt}
	\sqrt{n}(\theta_N - \theta) \overset{d}{\to} N_d(0, \Sigma)\quad  \text{as } N \to \infty\,,
\end{equation}
where $\Sigma$ is the $d \times d$ asymptotic covariance matrix that is difficult to calculate due to the serial correlation in the Markov chain. However, if the process is geometrically ergodic, then \cite{vats:fleg:jones:2015b} and \cite{vats:fleg:jones:2015} provide strongly consistent estimators of $\Sigma$. This leads to the construction of asymptotically valid confidence ellipsoids around $\theta_N$ and consistent estimation of effective sample size \cite{vats:fleg:jones:2015b}. Under the assumption of geometric ergodicity, the diagonals of $\Sigma$ were estimated by \cite{fleg:gong:2015}, \cite{fleg:jone:2010}, \cite{gong:fleg:2015}, \cite{hobe:jone:pres:rose:2002}, and \cite{jone:hara:caff:neat:2006} leading to reliable univariate analysis of MCMC output. For estimating quantiles, \cite{doss:fleg:jone:neat:2014} show that geometric ergodicity leads to  strongly consistent estimators of the Monte Carlo error.

There has been a considerable amount of work done in establishing geometric ergodicity of Gibbs samplers; many of which are two variable Gibbs samplers. Two variable Gibbs samplers are special because the marginal process for each  variable is a Markov chain with the same rate of convergence as the joint chain. Thus, it is sufficient to study the marginal chains to ascertain the properties of the joint chain. Higher variable Gibbs samplers do not benefit from this property and thus studying their rate of convergence is often more challenging.  Geometric ergodicity of the three variable Gibbs samplers in the Bayesian lasso and the Bayesian elastic net were shown by \cite{khare:hobe:2013} and \cite{roy:chakra:2016}, respectively; \cite{pal:khar:2014} proved geometric ergodicity of the three variable Gibbs sampler for the normal-gamma model of \cite{grif:brown:2010}; \cite{khar:hob:2012} demonstrated geometric ergodicity of the three variable Gibbs sampler in Bayesian quantile regression, and \cite{doss:hobe:2010} and \cite{jone:hobe:2004} demonstrated geometric ergodicity of the three variable Gibbs sampler in hierarchical random effects models. Recently, \cite{john:jone:2015} established geometric ergodicity of a four variable random scan Gibbs sampler for a hierarchical random effects model.

The rest of the paper is organized as follows.  In Section \ref{sec:markov_chain_background} we present important definitions and some relevant Markov chain background. In Section \ref{sec:bfl}, Section \ref{sec:bgl}, and Section \ref{sec:bsgl} we present the models and main results for the Bayesian fused lasso, Bayesian group lasso, and the Bayesian sparse group lasso. We finish with a discussion in Section \ref{sec:discussion}. All proofs are deferred to the appendices.

\section{Markov Chain Background} 
\label{sec:markov_chain_background}

Recall that $F$ denotes the posterior distribution of $(\beta, \eta,\sigma^2)$ obtained from \eqref{eq:model_general} and $f(\beta, \eta, \sigma^{2} \mid y)$ is the associated density. Also recall  that $\X = \real^p \times \real^s_+ \times \real_+$ is the support of the posterior and let $\B(\X)$ denote the Borel $\sigma$-algebra. Let $f(\beta \mid \eta, \sigma^2, y)$ be the density of the full conditional distribution of $\beta$ and similarly denote the densities of the conditional distributions of $\eta$ and $\sigma^2$ with $f(\eta \mid \beta, \sigma^2, y)$ and $f(\sigma^2 \mid \beta, \eta, y)$, respectively.  Let $(\beta^{(0)}, \eta^{(0)}, \sigma^{2(0)})$ be the starting value for the Gibbs sampler and define the Markov chain transition density (MTD) for the deterministic scan Gibbs sampler as
\begin{align*}
k\left( (\beta^{(1)}, \eta^{(1)}, \sigma^{2(1)}) \mid (\beta^{(0)}, \eta^{(0)}, \sigma^{2(0)}) \right) & = f(\beta^{(1)} \mid \eta^{(1)}, \sigma^{2(1)}, y)\\
& \quad \times f(\eta^{(1)} \mid \beta^{(0)}, \sigma^{2(1)}, y)\\
& \quad \times f(\sigma^{2(1)} \mid \beta^{(0)}, \eta^{(0)}, y)\,.	
\end{align*}
Then, the one-step transition kernel $P:\X \times \B(\X) \to [0,1]$ is such that for any $A \in \B(\X)$,
\begin{align*}
P\left((\beta^{(0)}, \eta^{(0)}, \sigma^{2(0)}) ,A   \right) & = \Pr \left((\beta^{(1)}, \eta^{(1)}, \sigma^{2(1)}) \in A \mid (\beta^{(0)}, \eta^{(0)}, \sigma^{2(0)})  \right)\\
& = \int_A  k\left( (\beta^{(1)}, \eta^{(1)}, \sigma^{2(1)}) \mid (\beta^{(0)}, \eta^{(0)}, \sigma^{2(0)}) \right)  d\beta^{(1)}\; d\eta^{(1)} \; d\sigma^{2(1)}\,.	
\end{align*}
Similarly, the $t$-step Markov chain transition kernel for the deterministic scan Gibbs sampler is $P^t:\X \times \B(\X) \to [0,1]$ such that for all $A \in \B(\X)$,
\[P^t\left((\beta^{(0)}, \eta^{(0)}, \sigma^{2(0)}), A\right) = \Pr\left((\beta^{(t)}, \eta^{(t)}, \sigma^{2(t)})  \in A \mid (\beta^{(0)}, \eta^{(0)}, \sigma^{2(0)})  \right)\,. \]

Let $\| \cdot \|_{TV}$ denote total variation norm. If the Markov chain is aperiodic, irreducible, and Harris recurrent (see \cite{meyn:twee:2009} for definitions), then for all $(\beta^{(0)}, \eta^{(0)}, \sigma^{2(0)}) \in \X$
\[\Big\|P^t\left((\beta^{(0)}, \eta^{(0)}, \sigma^{2(0)}),  \, \cdot \, \right) - F (\cdot) \Big\|_{TV} \to 0 \text{ as } t \to \infty\,. \] 
However, convergence of the transition kernel to the invariant distribution is not sufficient to ensure reliable inference and a geometric rate of convergence is often required. The Gibbs sampler is \textit{geometrically ergodic} if there exists a function $M:\X \to [0,\infty)$ and $0\leq \rho< 1$ such that for all $(\beta^{(0)}, \eta^{(0)}, \sigma^{2(0)}) \in \X$,
\begin{equation}
\label{eq:geom_ergodicity_definition}
	\Big \|P^t\left((\beta^{(0)}, \eta^{(0)}, \sigma^{2(0)}), \, \cdot \, \right) - F (\cdot) \Big\|_{TV} \leq M\left((\beta^{(0)}, \eta^{(0)}, \sigma^{2(0)}) \right) \rho^t\,. 
\end{equation}
Since $\rho < 1$, the upper bound in \eqref{eq:geom_ergodicity_definition} decreases at a geometric rate as a function of $t$. We will show that the three Gibbs samplers are geometrically ergodic by establishing a \textit{drift condition} and an associated \textit{minorization condition}. In effect, we will determine $M$ up to a proportionality constant and minimize this quantity to arrive at default starting values for the Gibbs samplers. Our results can also be used to obtain quantitative upper bounds for \eqref{eq:geom_ergodicity_definition} using the results of \cite{rose:1995a}; we do not explore that here.

Geometric ergodicity is often demonstrated by establishing a drift condition and an associated minorization condition. A drift condition is said to hold if there exists a function $V: \X \to [0, \infty)$, and constants $0 < \phi < 1$ and $ L < \infty$ such that for all $(\beta_0, \eta_0, \sigma^2_0) \in \X$
\begin{equation}
\label{eq:bfl_drift_condition}
	\E\left[ V(\beta, \eta,\sigma^2) \mid \beta_0, \eta_0, \sigma^2_0\right] \leq \phi V(\beta_0, \eta_0, \sigma^2_0)+ L\,.
\end{equation}
In \eqref{eq:bfl_drift_condition}, the expectation is with respect to the MTD for the Gibbs sampler.

Consider for $d > 0$, the set $C_d = \left\{(\beta, \eta, \sigma^2): V(\beta, \eta, \sigma^2) \leq d \right\}$. A minorization condition holds if there exists an $\epsilon > 0$ and a distribution $Q$ such that for all $(\beta_0, \eta_0, \sigma^2_0) \in C_d$
\begin{equation}
\label{eq:bfl_minorization}
P\left( (\beta_0, \eta_0, \sigma^2_0), \, \cdot \, \right)  \geq \epsilon \, Q(\cdot)\,.	
\end{equation}
It is well know that both \eqref{eq:bfl_drift_condition} and \eqref{eq:bfl_minorization} together imply geometric ergodicity (see \cite{jone:hobe:2001} and \cite{meyn:twee:2009}). The \textit{drift rate} $\phi$ determines how fast the Markov chain drifts back to the \textit{small set} $C_d$. A drift rate close to one signifies slower convergence and a smaller value indicates faster convergence. See \cite{jone:hobe:2001} for a heuristic explanation.

When a drift condition holds, \cite{meyn:twee:2009}, \cite{robe:rose:1997c}, and \cite[Fact 10]{robe:rose:2004} explain that the function $M$ is proportional to the \textit{drift function} $V$ up to an unknown constant. Thus, minimizing $V$ over the state space leads to the tightest bound in \eqref{eq:geom_ergodicity_definition} for our choice of $V$. This will lead us to default starting values for the three Gibbs sampler.

\section{Bayesian Fused Lasso} 
\label{sec:bfl}

Recall that $y \in \real^n$ is the observed realization of the response $Y$, $X$ is the $n \times p$ model matrix, and $\beta \in \real^p$ is the regression coefficient vector. \citet{tib:saun:ross:2005} proposed the fused lasso in an effort account for ordering in the predictors. In addition to penalizing the $L_1$ norm of the coefficients, the fused lasso also penalizes pairwise differences. That is, for tuning parameters $\lambda_1, \lambda_2 > 0$, the fused lasso estimate is,
\begin{equation}
	\label{eq:freq_fused_lasso}
	\hat{\beta}_{\text{fused}} = \arg\max_{\beta} \|y - X\beta\|^2 + \lambda_1\sum_{j=1}^{p} |\beta_j| + \lambda_2 \sum_{j=1}^{p-1} |\beta_{j+1} - \beta_{j}|\,.
\end{equation}
A Bayesian formulation of the fused lasso requires a prior on $\beta$ so that the resulting posterior mode is the $\hat{\beta}_{\text{fused}}$. \citet{kyu:gill:gho:cas:2010} present the following Bayesian formulation of the fused lasso. Let 
\begin{align*}
Y \mid \beta, \sigma^2, \tau^2  & \sim  N_n(X\beta, \sigma^2 I_n)\\
\beta \mid \tau^2, w^2, \sigma^2 & \sim N_p(0, \sigma^2 \,\Sigma_{\tau,w})\\
	\tau^2_i &\overset{\text{ind}}{\sim}  \dfrac{\lambda_1^2}{2} e^{-\lambda_1 \tau^2_i/2} d\tau_i^2    \quad \text{ for } \tau^2_i > 0 , i = 1, \dots, p\numberthis \label{eq:kyung_prior}\\
	w^2_i &\overset{\text{ind}}{\sim} \dfrac{\lambda_2^2}{2} e^{-\lambda_2 w^2_i/2} dw_i^2 \quad \text{ for } w_i^2 > 0, i = 1, \dots, p-1\\ 
\sigma^2 & \sim \text{Inverse-Gamma}(\alpha, \xi)\,,
\end{align*}
where $\alpha, \xi \geq 0$ are known, $\lambda_1, \lambda_2 > 0$ are fixed, and $\Sigma_{\tau, w}$ is such that $\Sigma^{-1}_{\tau, w}$ is a tridiagonal matrix with main diagonals
\begin{align*}
\left(\dfrac{1}{\tau^2_1} + \dfrac{1}{w_1^2}\right), \, \, \left(\dfrac{1}{\tau^2_i} + \dfrac{1}{w^2_{i-1}} + \dfrac{1}{w^2_{i}}\right) \; \text{ for } i = 2, \dots, p-1, \text{ and } \left(\dfrac{1}{\tau^2_p} + \dfrac{1}{w_{p-1}^2} \right)\,,
\end{align*}
and off diagonals $\{-1/w_i^2 :i = 1, \dots, p\}$.
%
Specifically, $\Sigma_{\tau, w}^{-1}$ takes the following form,
\begin{equation}
\label{eq:sigma_beta}
\Sigma^{-1}_{\tau, w} = \left[\begin{array}{ccccc} \frac{1}{\tau^2_1} + \frac{1}{w^2_1} & -\frac{1}{w^2_1} & 0 & \dots & 0 \\ -\frac{1}{w^2_1} & \frac{1}{\tau^2_2} + \frac{1}{w^2_1} + \frac{1}{w^2_2} & -\frac{1}{w^2_2} & \dots  & 0 \\ 0 & -\frac{1}{w_2^2} & \frac{1}{\tau^2_3} + \frac{1}{w^2_2} + \frac{1}{w^2_3} & \dots &  0\\ \dots & \dots & \dots & \ddots & \dots \\ 0 & 0 & \dots & \frac{1}{\tau^2_{p-1}} + \frac{1}{w^2_{p-2}} + \frac{1}{w^2_{p-1}}  & -\frac{1}{w^2_{p-1}}  \\  0 & 0 & \dots & -\frac{1}{w^2_{p-1}} & \frac{1}{\tau^2_p} + \frac{1}{w^2_{p-1}} \end{array}   \right]\,.
\end{equation}
Let $\tau^2 = (\tau^2_1, \dots, \tau^2_p)$ and $w^2 = (w_1^2, \dots, w_{p-1}^2)$. \citet{kyu:gill:gho:cas:2010} state that the priors in \eqref{eq:kyung_prior} lead to the following marginal prior on $\beta$ given $\sigma^2$.
\begin{equation}\label{eq:laplace_fused}
	\pi(\beta \mid \sigma^2) \propto \exp \left(-\dfrac{\lambda_1}{\sigma} \ds \sum_{j=1}^{p} | \beta_j | - \dfrac{\lambda_2}{\sigma} \ds \sum_{j=1}^{p-1} |\beta_{j+1} - \beta_j| \right)\,.
\end{equation}
However, this is not the case and in particular, the independent exponential priors on $\tau^2$ and $w^2$  do not lead to the marginal prior in \eqref{eq:laplace_fused}. Instead, our proposed prior is
\begin{equation}
\label{eq:bfl_correct_prior}
\pi(\tau^2, w^2) \propto \det\left(\Sigma_{\tau,w} \right)^{1/2} \left( \prod_{i=1}^{p} \left(\tau^2_i \right)^{-1/2} e^{-\lambda_1 \tau^2_i/2} \right) \left(\prod_{i=1}^{p-1} \left(w^2_i \right)^{-1/2}e^{-\lambda_2 w^2_i/2} \right)\,.
\end{equation}
In Appendix \ref{sub:propriety_of_prior}, we show that the prior on $(\tau^2, w^2)$ in \eqref{eq:bfl_correct_prior} is proper and in Appendix \ref{sub:validity_of_the_prior} we demonstrate that the marginal prior on $\beta$ given $\sigma^2$ is the appropriate prior in \eqref{eq:laplace_fused}. Thus, our model formulation is a valid Bayesian fused lasso model. 


\subsection{Gibbs Sampler for the Bayesian Fused Lasso} 
\label{sub:gibbs_sampler_for_the_bfl}
The resulting full conditionals from the model in \eqref{eq:kyung_prior} with prior \eqref{eq:bfl_correct_prior} are,
\begin{align*}
	\beta \mid \sigma^2, \tau^2, w^2, y & \; \; \sim \; \; N_p \left( (X^TX + \Sigma_{\tau,w}^{-1})^{-1} X^T y , \sigma^2 (X^TX + \Sigma_{\tau, w}^{-1} )^{-1}   \right)\\
	\dfrac{1}{\tau^2_{i}} \mid \beta, \sigma^2, y &  \; \; \overset{\text{ind}}{\sim}  \; \; \text{Inverse-Gaussian} \left( \sqrt{\dfrac{\lambda_1^2 \sigma^2}{\beta^2_i}}, \lambda_1^2 \right), \text{ for all $i = 1, \dots, p$}\\
	\dfrac{1}{w_i^2} \mid \beta,\sigma^2, y &  \; \; \overset{\text{ind}}{\sim} \text{Inverse-Gaussian} \; \;  \left( \sqrt{\dfrac{\lambda_2^2 \sigma^2}{ (\beta_{i+1} - \beta_{i})^2}}, \lambda_2^2 \right), \text{ for all $i = 1, \dots, p-1$}\\
	\sigma^2 \mid \beta, \tau^2, w^2, y &  \; \; \sim  \; \; \text{Inverse-Gamma} \left(\dfrac{n + p + 2\alpha}{2}, \dfrac{(y - X\beta)^T(y - X\beta) + \beta^T \Sigma^{-1}_{\tau, w}\beta + 2\xi }{2}   \right)\,. \numberthis \label{eq:bfl_full_cond}
\end{align*}
Here the Inverse-Gaussian$(a,b)$ density is $f(x) \propto x^{-3/2} \exp(-b(x-a)^2/2a^2x)$ and the density of an Inverse-Gamma$(a,b)$ distribution is $f(x) \propto x^{-a-1} \exp(-b/x)$.
Notice that the full conditionals for $\tau^2$ and $w^2$ are independent and thus can be updated in one block. This reduces the four variable Gibbs sampler to a three variable Gibbs sampler. If $(\beta_{(t)}, \tau^2_{(t)}, w^2_{(t)}, \sigma^2_{(t)})$ is the current state of the Gibbs sampler the $(t+1)$th state is obtained as follows.

\noindent\rule{15.1cm}{0.5pt}
\begin{enumerate}
	\item Draw $\sigma^{2}_{(n+1)}$ from $f(\sigma^2 \mid \beta_{(n)}, \tau^2_{(n)}, w^2_{(n)}, y)$.
	\item Draw $\left(1/\tau^2_{(n+1)}, 1/w^2_{(n+1)} \right)$ from $f(1/\tau^2 \mid \beta_{(n)}, \sigma^2_{(n+1)}, y) \;f(1/w^2 \mid \beta_{(n)}, \sigma^2_{(n+1)}, y)$.
	\item Draw $\beta_{(n+1)}$ from $f(\beta \mid \tau^2_{(n+1)}, w^2_{(n+1)}, \sigma^2_{(n+1)}, y)$.
\end{enumerate}
\noindent\rule{15.1cm}{0.5pt}\\
This three variable deterministic scan Gibbs sampler has MTD,
\begin{align*}
	& k_{BFL}(\beta, \tau^2, w^2, \sigma^2 \mid \beta_0, \tau^2_0, w_0^2, \sigma^2_0)\\
	& = f(\beta \mid \tau^2, w^2,  \sigma^2, y) \;f(\tau^2, w^2 \mid \beta_0, \sigma^2, y) f(\sigma^2 \mid \beta_0, \tau^2_0, w^2_0, y)\label{eq:mtd} \numberthis\,.
\end{align*}
First we note that the full conditional distribution of $1/\tau^2_i$ is an Inverse-Gaussian with mean parameter $\sqrt{\lambda_1^2 \sigma^2/ \beta^2_i}$. If the starting value for any $\beta_{i}$ is zero, this Inverse-Gaussian is still well defined as it is an Inverse-Gamma distribution with shape parameter  $1/2$ and rate parameter $\lambda_1^2/2$. The same is true for the full conditional of $1/w_i^2$. Thus, the MTD is strictly positive and well defined which implies the Markov chain is aperiodic, irreducible almost everywhere, and Harris recurrent. 


We define the drift function $V_{BFL}:\real^p \times \real_+^p \times \real_+^{p-1} \times \real_+ \to [0, \infty)$ as
\begin{equation}
\label{eq:bfl_drift}
	V_{BFL}(\beta, \tau^2, w^2, \sigma^2) = (y - X\beta)^T(y - X\beta) + \beta^T \Sigma^{-1}_{\tau, w} \beta + \dfrac{\lambda_1^2}{4} \ds \sum_{i=1}^{p} \tau^{2}_{i} + \dfrac{\lambda_2^2}{4} \ds \sum_{i=1}^{p-1} w_i^2\,.
\end{equation}
The following theorem is proved by establishing \eqref{eq:bfl_drift_condition} and \eqref{eq:bfl_minorization} for the drift function $V_{BFL}$.
\begin{theorem}
\label{thm:ge_bfl}
If $n \geq 3$, the three variable Gibbs sampler for the Bayesian fused lasso is geometrically ergodic.
\end{theorem}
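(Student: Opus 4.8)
The plan is to verify, for the drift function $V_{BFL}$ in \eqref{eq:bfl_drift}, a drift condition \eqref{eq:bfl_drift_condition} together with an associated minorization condition \eqref{eq:bfl_minorization}; since the excerpt already records that the MTD \eqref{eq:mtd} is everywhere positive, the chain is aperiodic, irreducible and Harris recurrent, and the two conditions together yield geometric ergodicity by \cite{jone:hobe:2001} and \cite{meyn:twee:2009}.

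For the drift I would compute $\E[V_{BFL}(\beta,\tau^2,w^2,\sigma^2)\mid\beta_0,\tau^2_0,w^2_0,\sigma^2_0]$ by iterated conditioning in the order of the scan: innermost over $\beta$ given $(\tau^2,w^2,\sigma^2)$, then over $(\tau^2,w^2)$ given $(\beta_0,\sigma^2)$, then over $\sigma^2$ given $(\beta_0,\tau^2_0,w^2_0)$. Writing $A=X^TX+\Sigma^{-1}_{\tau,w}$ and using the Gaussian full conditional of $\beta$ in \eqref{eq:bfl_full_cond}, a moment computation gives
\[\E\!\left[(y-X\beta)^T(y-X\beta)+\beta^T\Sigma^{-1}_{\tau,w}\beta\;\middle|\;\tau^2,w^2,\sigma^2\right]=y^T\!\left(I_n-XA^{-1}X^T\right)\!y+p\,\sigma^2\le\|y\|^2+p\,\sigma^2,\]
while the remaining two terms of $V_{BFL}$ are constant in $\beta$. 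Since $1/\tau^2_i$ and $1/w^2_i$ are Inverse-Gaussian, $\E[\tau^2_i\mid\beta_0,\sigma^2]=|\beta_{0,i}|/(\lambda_1\sigma)+\lambda_1^{-2}$ and $\E[w^2_i\mid\beta_0,\sigma^2]=|\beta_{0,i+1}-\beta_{0,i}|/(\lambda_2\sigma)+\lambda_2^{-2}$, so the last two terms of $V_{BFL}$ become $(\lambda_1/4\sigma)\sum_i|\beta_{0,i}|+(\lambda_2/4\sigma)\sum_i|\beta_{0,i+1}-\beta_{0,i}|$ plus a constant. Finally, with $Q_0=(y-X\beta_0)^T(y-X\beta_0)+\beta_0^T\Sigma^{-1}_{\tau_0,w_0}\beta_0$ (the first two terms of $V_{BFL}$ at the current state) and $a=(n+p+2\alpha)/2$, the Inverse-Gamma full conditional of $\sigma^2$ gives $\E[\sigma^2\mid\beta_0,\tau^2_0,w^2_0]=(Q_0+2\xi)/(n+p+2\alpha-2)$ and $\E[\sigma^{-1}\mid\beta_0,\tau^2_0,w^2_0]=\tfrac{\Gamma(a+1/2)}{\Gamma(a)}\big(\tfrac{Q_0+2\xi}{2}\big)^{-1/2}$.

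The main obstacle is to bound the cross terms $\E[\sigma^{-1}\mid\cdot]\sum_i|\beta_{0,i}|$ and $\E[\sigma^{-1}\mid\cdot]\sum_i|\beta_{0,i+1}-\beta_{0,i}|$ without retaining the factor $\Gamma(a+1/2)/\Gamma(a)\asymp\sqrt{(n+p)/2}$, which would make the drift rate degenerate to $1$ as $p$ grows. The key is the identity $\beta^T\Sigma^{-1}_{\tau,w}\beta=\sum_i\beta_i^2/\tau^2_i+\sum_i(\beta_{i+1}-\beta_i)^2/w^2_i$ read off from \eqref{eq:sigma_beta}, which gives $Q_0\ge\sum_i\beta_{0,i}^2/\tau^2_{0,i}$, combined with Cauchy--Schwarz, $\sum_i|\beta_{0,i}|\le(\sum_i\beta_{0,i}^2/\tau^2_{0,i})^{1/2}(\sum_i\tau^2_{0,i})^{1/2}$, so that $Q_0^{-1/2}\sum_i|\beta_{0,i}|\le(\sum_i\tau^2_{0,i})^{1/2}$ and similarly $Q_0^{-1/2}\sum_i|\beta_{0,i+1}-\beta_{0,i}|\le(\sum_iw^2_{0,i})^{1/2}$; then $\sqrt{S}\le(2\delta)^{-1}+(\delta/2)S$ for a free $\delta>0$ turns these square roots into arbitrarily small multiples of $\sum_i\tau^2_{0,i}$ and $\sum_iw^2_{0,i}$ plus constants. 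Assembling the three rounds of conditioning yields
\[\E\!\left[V_{BFL}\mid\beta_0,\tau^2_0,w^2_0,\sigma^2_0\right]\le\frac{p}{\,n+p+2\alpha-2\,}\,Q_0+c_\tau(\delta)\sum_i\tau^2_{0,i}+c_w(\delta)\sum_iw^2_{0,i}+L,\]
with $c_\tau(\delta),c_w(\delta)\to0$ as $\delta\to0$ and $L<\infty$; choosing $\delta$ small enough that $c_\tau(\delta)\le\tfrac{\lambda_1^2}{4}\cdot\tfrac{p}{n+p+2\alpha-2}$ and $c_w(\delta)\le\tfrac{\lambda_2^2}{4}\cdot\tfrac{p}{n+p+2\alpha-2}$ establishes \eqref{eq:bfl_drift_condition} with $\phi=p/(n+p+2\alpha-2)$. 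This $\phi$ is strictly below $1$ exactly when $n+2\alpha>2$, which is where $n\ge3$ enters (for any $\alpha\ge0$); the same inequality makes $a>1$, so $\E[\sigma^2\mid\cdot]<\infty$.

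It remains to check \eqref{eq:bfl_minorization} on $C_d=\{V_{BFL}\le d\}$. Here $V_{BFL}\le d$ forces $Q_0\le d$, $\sum_i\tau^2_{0,i}\le4d/\lambda_1^2$, $\sum_iw^2_{0,i}\le4d/\lambda_2^2$, and, since $Q_0\ge\|\beta_0\|^2/\max_i\tau^2_{0,i}$, also $\|\beta_0\|\le2d/\lambda_1$, while $b_0=(Q_0+2\xi)/2$ stays in a compact subinterval of $(0,\infty)$; consequently the parameters entering the full conditionals of $\sigma^2$ and of $(\tau^2,w^2)$ range over a bounded set, the degenerate scale value $0$ (when some $\beta_{0,i}$ or difference vanishes) being covered by continuity since the limiting full conditional is proper. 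Because $k_{BFL}$ is independent of $\sigma^2_0$ and its $\beta$-factor is independent of the conditioning state, $\inf_{(\beta_0,\tau^2_0,w^2_0,\sigma^2_0)\in C_d}k_{BFL}=f(\beta\mid\tau^2,w^2,\sigma^2,y)\inf_{C_d}\!\big[f(\tau^2,w^2\mid\beta_0,\sigma^2,y)\,f(\sigma^2\mid\beta_0,\tau^2_0,w^2_0,y)\big]$, which by continuity and compactness is strictly positive for every $(\beta,\tau^2,w^2,\sigma^2)$; integrating it over the state space produces $\epsilon>0$ and the minorizing distribution $Q$. Drift plus minorization then give geometric ergodicity, and minimizing $V_{BFL}$ over the state space, as anticipated in Section \ref{sec:markov_chain_background}, supplies a default starting value for the sampler.
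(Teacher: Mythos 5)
Your proposal is sound and follows the paper's high-level architecture (drift for the same $V_{BFL}$, minorization on the sublevel sets $C_d$, then the standard drift-plus-minorization theorem), but both halves are executed by genuinely different devices. In the drift, the paper never computes $\E[\sigma^{-1}]$: at the $(\tau^2,w^2)$ stage it applies $2ab\le a^2+b^2$ with the weight $(n+p+2\alpha)$ built in, converting $|\beta_{0,i}|/\sigma$ into $\beta_{0,i}^2/\sigma^2$ plus a constant so that $\E[\sigma^{-2}\mid\cdot]=(n+p+2\alpha)/(Q_0+2\xi)$ cancels the weight exactly, and then Lemma \ref{lem:tau_fraction_bound} converts the ratio into $\sum_i\tau_{0,i}^2$ with coefficient $\lambda_1^2/8$, giving $\phi_{BFL}=\max\{p/(n+p+2\alpha-2),\,1/2\}$. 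You instead keep $\E[\sigma^{-1}\mid\cdot]=\Gamma(a+\tfrac12)\Gamma(a)^{-1}b_0^{-1/2}$, kill the $b_0^{-1/2}$ with Cauchy--Schwarz against $Q_0\ge\sum_i\beta_{0,i}^2/\tau_{0,i}^2$, and absorb the residual $\sqrt{\sum_i\tau_{0,i}^2}$ via $\sqrt{S}\le(2\delta)^{-1}+(\delta/2)S$; this buys an arbitrarily small coefficient on the $\tau^2,w^2$ terms and hence the cleaner rate $\phi=p/(n+p+2\alpha-2)$, at the price of a larger additive constant $L$. Both are valid; the $n\ge3$ condition enters identically.

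For the minorization you replace the paper's explicit construction (Lemmas \ref{lem:inv_gauss_minor} and \ref{lem:yty_bound_minor}, which produce a closed-form $\epsilon$ usable with Rosenthal-type quantitative bounds) by a soft continuity-and-compactness argument. That is adequate for the qualitative statement of the theorem, but one step you assert is exactly the crux the paper proves: that $b_0=(Q_0+2\xi)/2$ stays in a compact subinterval of $(0,\infty)$ on $C_d$. The upper bound is immediate, but the lower bound is not when $\xi=0$: you must rule out $Q_0\to0$ along $C_d$. This follows because on $C_d$ one has $\tau_{0,i}^2\le 4d/\lambda_1^2$, hence $\beta_0^T\Sigma^{-1}_{\tau_0,w_0}\beta_0\ge(\lambda_1^2/4d)\|\beta_0\|^2$, so
\[
Q_0\;\ge\;\inf_{\beta}\left\{\|y-X\beta\|^2+\frac{\lambda_1^2}{4d}\|\beta\|^2\right\}\;=\;y^Ty-y^TX\left(X^TX+\frac{\lambda_1^2}{4d}I_p\right)^{-1}X^Ty\;>\;0
\]
provided $y\ne 0$ (this is the role of Lemma \ref{lem:yty_bound_minor} and \eqref{eq:bfl_sigma_yty_bound} in the paper; the paper's $\epsilon$ likewise degenerates if $y=0$ and $\xi=0$). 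With that line supplied, and noting that $C_d$ itself is not compact (the $\tau_{0,i}^2,w_{0,i}^2$ may approach $0$) so the infimum must be taken over the compact ranges of the quantities the conditionals actually depend on ($Q_0$ and $\beta_0$), your argument goes through.
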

\begin{proof}
	See Appendix \ref{sec:bfl_proof}.
\end{proof}

\begin{remark}\label{rem:bfl_drift_analyse}
In Appendix \ref{sub:bfl_drift_condition}, we arrive at the drift rate
\[\phi_{BFL} = \max \left\{ \dfrac{p}{n + p + 2\alpha - 2}, \dfrac{1}{2} \right\}\,.
\] Thus, $\phi_{BFL}$  is no better than 1/2 and as $p$ increases, the drift rate approaches one. Thus, convergence may be slower for problems with large $p$.
\end{remark}

\begin{remark}\label{rem:bfl_starting_value}
Minimizing $V_{BFL}$ yields default starting value of $\beta_0$ being the frequentist fused lasso estimate, $\tau^2_{0,i} = 2|\beta_{0,i}|/\lambda_1$ and $w^2_{0,i} = 2|\beta_{0, i+1} - \beta_{0,i}|/\lambda_2$. See Appendix \ref{sub:bfl_starting_values} for details.
\end{remark}




\section{Bayesian Group Lasso} 
\label{sec:bgl}
Knowledge of correlation among predictors is ignored by the usual lasso. The group lasso of  \cite{yuan:lin:2006}  imposes sparsity across grouped predictors. For a fixed $K$, partition $\beta$ in $K$ groups of size $m_1, m_2, \dots, m_K$; the groups being denoted by $\beta_{G_1}, \beta_{G_2}, \dots, \beta_{G_K}$. Let $X_{G_k}$ denote the matrix of predictors for group $k$. The group lasso estimate for tuning parameter $\lambda > 0$ is,
\begin{equation}
	\label{eq:fre_group_lasso}
	\hat{\beta}_{\text{group}} = \arg \max_{\beta} \Big\|y - \sum_{k=1}^{K} X_{G_k} \beta_{G_k} \Big\|^2 + \lambda\sum_{k=1}^{K} \|\beta_{G_k}\|\,.
\end{equation}
\citet{kyu:gill:gho:cas:2010} present the following Bayesian analog of the group lasso. Let
\begin{align*}
Y \mid \beta, \sigma^2 \; \; & \sim  \; \;  N_n(X\beta, \sigma^2 I_n) \\
\beta_{G_k} \mid \sigma^2, \tau^2_k  \; \; & \overset{\text{ind}}{\sim}  \; \;  N_{m_k}(0, \sigma^2 \tau^2_{k}I_{m_k}) \quad k = 1, \dots, K \numberthis \label{eq:bsl_model} \\
\tau^2_{k} \; \; & \overset{\text{ind}}{\sim}  \; \; \text{Gamma} \left( \dfrac{m_k + 1}{2}, \dfrac{\lambda^2}{2} \right) \quad k = 1, \dots, K\\
\sigma^2  \; \; & \sim  \; \; \text{Inverse-Gamma}(\alpha, \xi)\,,
\end{align*}
where $\lambda > 0$ is fixed, $\alpha, \xi \geq 0$ are known, and the density of a Gamma$(a,b)$ is $f(x)$ $\propto$ $x^{a - 1} e^{-bx}$.

\subsection{Gibbs Sampler for Bayesian Group Lasso} 
\label{sub:gibbs_sampler_for_bgl}

Let $\tau^2 = (\tau^2_1, \tau^2_2, \dots, \tau^2_K)$. Define 
\[D_{\tau}  = \text{diag}(\underbrace{\tau^2_1, \dots, \tau^2_{1}}_{m_1}, \underbrace{\tau^2_2, \dots, \tau^2_{2}}_{m_2}, \dots, \underbrace{\tau^2_{K}, \dots, \tau^2_{K}}_{m_K} )\,. \]
The Bayesian group lasso in \eqref{eq:bsl_model} leads to the following full conditionals for $\beta, \tau^2$ and $\sigma^2$:
\begin{align*}
	\beta \mid \sigma^2, \tau^2, y & \; \; \sim  \; \; N_p \left( (X^TX + D_{\tau}^{-1})^{-1} X^T y , \sigma^2 (X^TX + D^{-1}_{\tau} )^{-1}  \right)\\
	\dfrac{1}{\tau^2_{k}} \mid \beta, \sigma^2, y &  \; \; \overset{\text{ind}}{\sim} \; \; \text{Inverse-Gaussian} \left( \sqrt{\dfrac{\lambda^2 \sigma^2}{\beta^T_{G_k} \beta_{G_k} }}, \lambda^2 \right), \text{ for } k = 1, \dots, K \numberthis \label{eq:full_cond}\\
	\sigma^2 \mid \beta, \tau^2, y &  \; \; \sim \; \; \text{Inverse-Gamma} \left(\dfrac{n + p + 2\alpha}{2}, \dfrac{(y - X\beta)^T(y - X\beta) + \beta^TD^{-1}_{\tau}\beta + 2\xi }{2}   \right)\,. 
\end{align*}
These full conditionals lead to a three variable Gibbs sampler where the variables are $\beta, \tau^2$, and $\sigma^2$.

\begin{remark}\label{rem:kyung_error}
 \cite{kyu:gill:gho:cas:2010} propose a $K+2$ variable Gibbs sampler where the variables are $\beta_{G_1}, \beta_{G_2}, \dots, \beta_{G_K}, \tau^2$, and $\sigma^2$. For this sampler, the full conditionals for $\sigma^2$ and $\tau^2$ are the same as above, but the full conditional for each $\beta_{G_k}$ is
 \begin{align*}
 &\beta_{G_k} \mid \beta_{-G_k}, \sigma^2, \tau^2, y \\
 & \sim N_{m_k} \left( \left(X_{G_k}^TX_{G_k} + \tau^{-2}_{k} I_{m_k} \right)^{-1} X^T_{G_k} \left(y - \sum_{k'\ne k}X_{G_k'} \beta_{G_{k'}}\right), \sigma^2 \left(X_{G_k}^TX_{G_k} + \tau^{-2}_{k} I_{m_k}\right)^{-1}  \right)\,. 	
 \end{align*}
\cite{kyu:gill:gho:cas:2010} had an error in their full conditional where they had 
\[\left(y - \dfrac{1}{2}\sum_{k'\ne k}X_{G_k'} \beta_{G_{k'}}\right) \text{ instead of } \left(y - \sum_{k'\ne k}X_{G_k'} \beta_{G_{k'}}\right)\,.\]
The motivation for using the $K+2$ sampler is to avoid the $p \times p$ matrix inversion of $(X^TX + D_{\tau}^{-1})$, and instead do $K$ matrix inversions each of size $m_k \times m_k$. This reduces the computational cost from $O(p^3)$ to $O(\sum_{k=1}^{K} m_k^3)$. Such a technique was also discussed in \cite{ishw:rao:2005}. However, it is known that a blocked Gibbs sampler mixes as well as or better than a full Gibbs sampler (see \cite{liu:wong:kong:1994}). In addition, \cite{bhatt:chak:mall:2015} recently proposed a linear time sampling algorithm to sample from high-dimensional normal distributions of the form in \eqref{eq:full_cond}. Using their method, the computational cost of drawing from the full conditional of $\beta$ is $O(n^2p)$, and thus the $K + 2$ variable Gibbs sampler is not required.
\end{remark}

We will study the rate of convergence of the three variable Gibbs sampler. If $(\beta_{(t)}, \tau^{2}_{(t)}, \sigma^{2}_{(t)})$ is the current state of the Gibbs sampler, the $(t+1)$th state is obtained as follows.

\noindent\rule{15.1cm}{0.5pt}
\begin{enumerate}
	\item Draw $\sigma^{2}_{(n+1)}$ from $f(\sigma^2 \mid \beta_{(n)}, \tau^2_{(n)}, y)$.
	\item Draw $1/\tau^2_{(n+1)}$ from  $f(1/\tau^2 \mid \beta_{(n)}, \sigma^2_{(n+1)}, y)$.
	\item Draw $\beta_{(n+1)}$ from  $f(\beta \mid \tau^2_{(n+1)}, \sigma^2_{(n+1)}, y)$.
\end{enumerate}
\noindent\rule{15.1cm}{0.5pt}\\
The MTD for the above three variable deterministic scan Gibbs sampler is
\begin{equation}
\label{eq:mtd}
	k_{BGL}(\beta, \tau^2, \sigma^2 \mid \beta_0, \tau^2_0, \sigma^2_0) = f(\beta \mid \tau^2, \sigma^2, y) \;f(\tau^2 \mid \beta_0, \sigma^2, y) \; f(\sigma^2 \mid \beta_0, \tau^2_0, y)\,.
\end{equation}
%
As in the Bayesian fused lasso, the MTD is well defined and strictly positive leading to an aperiodic, irreducible almost everywhere, and Harris recurrent Markov chain. 

Define the drift function $V_{BGL}:\real^p \times \real_+^K \times \real_+ \to [0, \infty)$ as
\begin{equation}
\label{eq:drfit_bgl}
	V_{BGL}(\beta, \tau^2, \sigma^2) = (y - X\beta)^T(y - X\beta) + \beta^T D_{\tau}^{-1} \beta + \dfrac{\lambda^2}{4} \ds \sum_{k=1}^{K} \tau^{2}_{k}\,.
\end{equation}

\begin{theorem}
\label{thm:ge_bgl}
If $n \geq 3$, the three variable Gibbs sampler for the Bayesian group lasso is geometrically ergodic.
\end{theorem}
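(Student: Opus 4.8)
The plan is to mirror the proof of Theorem~\ref{thm:ge_bfl}: verify the drift condition \eqref{eq:bfl_drift_condition} and an associated minorization condition \eqref{eq:bfl_minorization} for the drift function $V_{BGL}$ of \eqref{eq:drfit_bgl}, then invoke the standard implication of \cite{jone:hobe:2001} and \cite{meyn:twee:2009}. Aperiodicity, irreducibility, and Harris recurrence are already in hand, since $k_{BGL}$ is everywhere strictly positive, the Inverse-Gaussian full conditional of $1/\tau^2_k$ being read as an Inverse-Gamma$(1/2,\lambda^2/2)$ whenever $\|\beta_{0,G_k}\|=0$.

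For the drift condition I would evaluate $\E[V_{BGL}(\beta,\tau^2,\sigma^2)\mid\beta_0,\tau^2_0,\sigma^2_0]$ by iterated expectation in the reverse of the sampling order: first over $\beta\mid\tau^2,\sigma^2$, then over $\tau^2\mid\beta_0,\sigma^2$, then over $\sigma^2\mid\beta_0,\tau^2_0$. Writing $A_\tau=X^TX+D_\tau^{-1}$, the Gaussian full conditional of $\beta$ and the identity $\mathrm{tr}(A_\tau A_\tau^{-1})=p$ give
\[
\E\!\left[(y-X\beta)^T(y-X\beta)+\beta^TD_\tau^{-1}\beta \,\middle|\, \tau^2,\sigma^2\right]=y^Ty-y^TXA_\tau^{-1}X^Ty+p\,\sigma^2\ \le\ y^Ty+p\,\sigma^2,
\]
while the $\tfrac{\lambda^2}{4}\sum_k\tau^2_k$ term is untouched. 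Since $1/\tau^2_k\mid\beta_0,\sigma^2$ is Inverse-Gaussian with mean $\lambda\sigma/\|\beta_{0,G_k}\|$ and shape $\lambda^2$, the reciprocal moment of the Inverse-Gaussian gives $\E[\tau^2_k\mid\beta_0,\sigma^2]=\|\beta_{0,G_k}\|/(\lambda\sigma)+\lambda^{-2}$. Finally $\sigma^2\mid\beta_0,\tau^2_0$ is Inverse-Gamma$(a_0,b_0)$ with $a_0=(n+p+2\alpha)/2$ and $b_0=\tfrac12[(y-X\beta_0)^T(y-X\beta_0)+\beta_0^TD_{\tau_0}^{-1}\beta_0+2\xi]$, so $\E[\sigma^2\mid\beta_0,\tau^2_0]=b_0/(a_0-1)$, which is finite precisely because $n\ge3$ forces $a_0>1$, and $\E[\sigma^{-1}\mid\beta_0,\tau^2_0]=\Gamma(a_0+\tfrac12)/(\Gamma(a_0)\sqrt{b_0})$.

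Assembling these, the term $p\,\E[\sigma^2\mid\beta_0,\tau^2_0]$ contributes the coefficient $p/(n+p+2\alpha-2)$ in front of $(y-X\beta_0)^T(y-X\beta_0)+\beta_0^TD_{\tau_0}^{-1}\beta_0$, which is $<1$ exactly when $n\ge3$, plus the constant $2p\xi/(n+p+2\alpha-2)$. The delicate term is the cross term $\tfrac{\lambda}{4}\big(\sum_k\|\beta_{0,G_k}\|\big)\E[\sigma^{-1}\mid\beta_0,\tau^2_0]$: I would control it using $b_0\ge\tfrac12\beta_0^TD_{\tau_0}^{-1}\beta_0=\tfrac12\sum_k\|\beta_{0,G_k}\|^2/\tau^2_{0,k}$, so that Cauchy--Schwarz gives $\sum_k\|\beta_{0,G_k}\|/\sqrt{b_0}\le\sqrt{2\sum_k\tau^2_{0,k}}$; the resulting bound is a state-independent multiple of $\big(\sum_k\tau^2_{0,k}\big)^{1/2}$, and the elementary inequality $\sqrt{t}\le\tfrac12(ct+c^{-1})$, with $c$ chosen small enough, converts it into at most $\tfrac12\cdot\tfrac{\lambda^2}{4}\sum_k\tau^2_{0,k}$ plus a constant. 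All remaining constants ($y^Ty$, $K/4$, $2p\xi/(n+p+2\alpha-2)$, and the leftover from the last inequality) collect into a finite $L$, so \eqref{eq:bfl_drift_condition} holds with $\phi_{BGL}=\max\{p/(n+p+2\alpha-2),\,1/2\}<1$.

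For the minorization on $C_d=\{V_{BGL}\le d\}$, observe that $V_{BGL}\le d$ simultaneously forces $(y-X\beta_0)^T(y-X\beta_0)\le d$ and $\tau^2_{0,k}\le 4d/\lambda^2$ for every $k$; since then $\beta_0^TD_{\tau_0}^{-1}\beta_0\ge(\lambda^2/4d)\|\beta_0\|^2$, it also forces $\|\beta_0\|^2\le 4d^2/\lambda^2$ and $b_0\le d+\xi$. The kernel $k_{BGL}$ does not depend on $\sigma^2_0$, so the fact that $C_d$ leaves $\sigma^2_0$ unbounded is harmless. On this bounded set of starting values each of $f(\beta\mid\tau^2,\sigma^2,y)$, $f(\tau^2\mid\beta_0,\sigma^2,y)$, $f(\sigma^2\mid\beta_0,\tau^2_0,y)$ is bounded below by a fixed sub-density not depending on $(\beta_0,\tau^2_0)$; taking the product and renormalizing yields $\epsilon>0$ and a distribution $Q$ with $P((\beta_0,\tau^2_0,\sigma^2_0),\cdot)\ge\epsilon\,Q(\cdot)$ on $C_d$, and \eqref{eq:bfl_drift_condition}--\eqref{eq:bfl_minorization} then give geometric ergodicity. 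I expect the drift computation — and within it the handling of the $\E[\sigma^{-1}\mid\beta_0,\tau^2_0]$ cross term — to be the main obstacle; the minorization is routine once the boundedness of $(\beta_0,\tau^2_0)$ over $C_d$ is noted.
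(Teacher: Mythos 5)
Your proposal is correct and follows the same overall architecture as the paper's proof: the same drift function $V_{BGL}$, the same iterated-expectation order ($\beta$, then $\tau^2$, then $\sigma^2$), the same small set $C_d$, and the same bounds $\tau^2_{0,k}\le 4d/\lambda^2$, $\beta_{0,G_k}^T\beta_{0,G_k}\le 4d^2/\lambda^2$ feeding the minorization. The one place you genuinely diverge is the treatment of the cross term $\tfrac{\lambda}{4}\sum_k\|\beta_{0,G_k}\|\,\E[\sigma^{-1}\mid\beta_0,\tau^2_0]$. The paper applies $2ab\le a^2+b^2$ \emph{before} the $\sigma^2$-expectation, converting $\sqrt{\beta_{0,G_k}^T\beta_{0,G_k}/\sigma^2}$ into a multiple of $\beta_{0,G_k}^T\beta_{0,G_k}/\sigma^2$ plus a constant, then uses the elementary Inverse-Gamma moment $\E[1/\sigma^2]$ and Lemma \ref{lem:tau_fraction_bound} to bound $\beta_0^T\beta_0/\beta_0^TD_{\tau_0}^{-1}\beta_0\le M\sum_k\tau^2_{0,k}$, landing exactly on the coefficient $1/2$. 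You instead compute the half-moment $\E[\sigma^{-1}]=\Gamma(a_0+\tfrac12)/(\Gamma(a_0)\sqrt{b_0})$ exactly and control $\sum_k\|\beta_{0,G_k}\|/\sqrt{b_0}$ by Cauchy--Schwarz against $b_0\ge\tfrac12\beta_0^TD_{\tau_0}^{-1}\beta_0$, finishing with $\sqrt{t}\le\tfrac12(ct+c^{-1})$. Both are valid and give $\phi_{BGL}<1$ for $n\ge 3$; yours buys a tunable (arbitrarily small) coefficient on the $\tfrac{\lambda^2}{4}\sum_k\tau^2_k$ block at the cost of a larger additive constant, while the paper's keeps every step expressible through the two reusable lemmas it deploys identically across all three models. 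On the minorization, your sketch is right, with the one caveat that the lower bound on $f(\tau^2\mid\beta_0,\sigma^2,y)$ obtained from Lemma \ref{lem:inv_gauss_minor} necessarily carries a factor of the form $\exp\{-c/\sigma^2\}$ (it cannot be uniform in $\sigma^2$); the paper absorbs this into a modified Inverse-Gamma density for $\sigma^2$, which is precisely the ``renormalizing'' you allude to, so no gap results.
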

\begin{proof}
	See Appendix \ref{sec:bgl_proof}.
\end{proof}

\begin{remark}\label{rem:bgl_drift_analyse}
As in the Bayesian fused lasso Gibbs sampler, the drift rate, 
\[\phi_{BGL} =  \max \left\{ \dfrac{p}{n + p + 2\alpha - 2}, \dfrac{1}{2} \right\}, \] is no better than 1/2  and approaches 1 as $p$ increases.
\end{remark}

\begin{remark}\label{rem:bgl_starting_value}
Minimizing $V_{BGL}$ yields default starting values for the Markov chain as $\beta_0$ being the frequentist group lasso estimate and $\tau^2_{0,k} = 2\sqrt{\beta^T_{0,G_k} \beta_{0, G_k}}/\lambda$. See Appendix \ref{sub:bgl_starting_values} for details.
\end{remark}


\begin{remark}\label{rem:bayes_lasso}
Since for $K = p$, the Bayesian group lasso is the Bayesian lasso, our result of geometric ergodicity holds for the Bayesian lasso as well. Geometric ergodicity of the Bayesian lasso was demonstrated by \cite{khare:hobe:2013} under exactly the same conditions. Our result on the starting values in Remark \ref{rem:bgl_starting_value} also holds for the Bayesian lasso Gibbs sampler. 
\end{remark}



\section{Bayesian Sparse Group Lasso} 
\label{sec:bsgl}

The group lasso induces sparsity across groups but does not induce sparsity within a group. \citet{sim:fried:has:2013} added an $L_1$ penalty on the individual coefficients to the group lasso to arrive at the sparse group lasso. As before, for a fixed $K$, partition $\beta$ in $K$ groups each of size $m_1, m_2, \dots, m_K$, the groups being denoted by $\beta_{G_1}, \beta_{G_2}, \dots, \beta_{G_K}$. For tuning parameters $\lambda_1 >0$ and $\lambda_2 >0$, the sparse group lasso estimate is
\begin{equation}
	\label{eq:freq_sparse}
	\hat{\beta}_{\text{sgroup}}  = \arg \max_{\beta} \Big \| y - \sum_{k=1}^{K} X_{G_k}\beta_{G_k} \Big\|^2 + \lambda_1 \|\beta\|_1 + \lambda_2 \sum_{k=1}^{K} \|\beta_{G_k}\|_2 \,,
\end{equation}
where $\| \cdot\|_1$ is the $L_1$ norm. The Bayesian sparse group lasso was introduced by \cite{xu:gho:2015}. Before presenting the model, we give some definitions. Let $\gamma^2_{1,1}, \gamma^2_{1,2}, \dots, \gamma^2_{1,m_1}, \dots, \gamma^2_{K,m_K}$ and $\tau^2_1, \dots, \tau^2_p$  be variables defined on the positive reals. For each group $k$ define,
\[V_k = \text{Diag} \left\{\left(\dfrac{1}{\tau^2_k} + \dfrac{1}{\gamma^2_{k,j}} \right)^{-1} \; : \; j = 1, \dots, m_k\right\}\,. \]
The notation $\gamma^2_{k,j}$ is purely for convenience and can easily be replaced with $\gamma_{i}^2$ for $i = 1, \dots, p$. Let $\tau^2 = (\tau^2_1, \tau^2_2, \dots, \tau^2_K)$ and let $\gamma^2 = (\gamma_{1,1}^2, \dots, \gamma_{1, m_1}^2, \dots, \gamma_{K,1}^2, \dots,  \gamma^2_{K, m_K})$. The Bayesian sparse group lasso model formulated by \cite{xu:gho:2015} is
\begin{align*}
Y \mid \beta, \sigma^2 \;\; & \sim \;\;  N_n(X\beta, \sigma^2 I_n) \\
\beta_{G_k} \mid \sigma^2, \tau^2, \gamma^2 \;\; & \overset{\text{iid}}{\sim} \;\;  N_{m_k}(0, \sigma^2 V_k) \quad \text{for } k = 1, \dots, K  \numberthis \label{eq:bsgl_model}\\
\pi(\gamma_{k,1}, \dots, \gamma_{k,m_k}, \tau^2_{k}) \;\; & =  \;\; \pi_k \quad \text{ independently for }k = 1, \dots, K\\
\sigma^2  \;\; & \sim  \;\; \text{Inverse-Gamma}(\alpha, \xi)\,,
\end{align*}
where $\alpha, \xi \geq 0$ are fixed and the independent prior on each $(\gamma_{k,1}, \dots, \gamma_{k,m_k}, \tau^2_{k}) $ is
\begin{equation}
\label{eq:bsgl_prior}
\pi_k \propto \prod_{j=1}^{m_k}\left[ \left(\gamma^2_{k,j} \right)^{-\frac{1}{2}} \left(\dfrac{1}{\gamma_{k,j}^2} + \dfrac{1}{\tau_k^2}   \right)^{-\frac{1}{2}}  \right] \left(\tau^2_k \right)^{-\frac{1}{2}} \exp \left\{-\dfrac{\lambda_2^2}{2}\ds \sum_{j=1}^{m_k} \gamma^2_{k,j} - \dfrac{\lambda^2_1}{2} \tau^2_k \right\}\,.	
\end{equation}
Here  $\lambda_1, \lambda_2 > 0$ are fixed. \citet{xu:gho:2015} show that the prior in \eqref{eq:bsgl_prior} is proper with the normalizing constant being a function of $\lambda_1$ and $\lambda_2$. 

\subsection{Gibbs Sampler for Bayesian Sparse Group Lasso} 
\label{sub:gibbs_sampler_for_bsgl}
Define $V_{\tau, \gamma}$ to be the diagonal matrix with diagonals  being that of $V_1, \dots, V_K$ in that sequence. In addition, let $\beta_{k,j}$, refer to the $j$th coefficient in the $k$th group. The Bayesian sparse group lasso model in \eqref{eq:bsgl_model} leads to the following full conditionals for $\beta, \tau^2, \gamma^2$ and $\sigma^2$:
\begin{align*}
	\beta \mid \sigma^2, \tau^2, \gamma^2, y & \; \; \sim  \; \; N_p \left( (X^TX + V_{\tau, \gamma}^{-1})^{-1} X^T y , \sigma^2 (X^TX + V^{-1}_{\tau, \gamma} )^{-1}   \right)\\
	\dfrac{1}{\tau^2_{k}} \mid \beta, \sigma^2, y & \; \; \overset{\text{ind}}{\sim} \; \;  \text{Inverse-Gaussian} \left( \sqrt{\dfrac{\lambda_1^2 \sigma^2}{\beta^T_{G_k} \beta_{G_k} }}, \lambda_1^2 \right), \text{ for all $k$}\numberthis \label{eq:full_cond_bsgl}\\
	\dfrac{1}{\gamma^2_{k,j}} \mid \beta,\sigma^2, y & \; \; \overset{\text{ind}}{\sim}  \; \;  \text{Inverse-Gaussian} \left( \sqrt{\dfrac{\lambda_2^2 \sigma^2}{\beta^2_{k,j} }}, \lambda_2^2 \right), \text{ for all $k,j$}\\
	\sigma^2 \mid \beta, \tau^2, \gamma^2, y &  \; \; \sim  \; \; \text{Inverse-Gamma} \left(\dfrac{n + p + 2\alpha}{2}, \dfrac{(y - X\beta)^T(y - X\beta) + \beta^T V^{-1}_{\tau, \gamma}\beta + 2\xi }{2}   \right)\,. 
\end{align*}
Notice that the full conditionals for $\tau^2$ and $\gamma^2$ are independent and thus can be updated in one block leading to a three variable Gibbs sampler. If $(\beta_{(t)}, \tau^2_{(t)}, \gamma^2_{(t)}, \sigma^2_{(t)})$ is the current state of the Gibbs sampler, the $(t+1)$th state is obtained as follows.

\noindent\rule{15.1cm}{0.5pt}
\begin{enumerate}
	\item Draw $\sigma^{2}_{(n+1)}$ from  $f(\sigma^2 \mid \beta_{(n)}, \tau^2_{(n)}, \gamma^2_{(n)}, y)$.
	\item Draw $\left(1/\tau^2_{(n+1)}, 1/\gamma^2_{(n+1)} \right)$ from $f(1/\tau^2 \mid \beta_{(n)}, \sigma^2_{(n+1)}, y) \;f(1/\gamma^2 \mid \beta_{(n)}, \sigma^2_{(n+1)}, y)$.
	\item Draw $\beta_{(n+1)}$ from $f(\beta \mid \tau^2_{(n+1)}, \gamma^2_{(n+1)}, \sigma^2_{(n+1)}, y)$.
\end{enumerate}
\noindent\rule{15.1cm}{0.5pt}\\
The MTD for the three variable Gibbs sampler is
\begin{align*}
	& k_{BSGL}(\beta, \tau^2, \gamma^2, \sigma^2 \mid \beta_0, \tau^2_0, \gamma^2_0, \sigma^2_0)\\
	& = f(\beta \mid \tau^2, \gamma^2, \sigma^2, y) \;f(\tau^2, \gamma^2 \mid \beta_0, \sigma^2, y) \; f(\sigma^2 \mid \beta_0, \tau^2_0, \gamma_0^2, y)\,. \numberthis \label{eq:bsgl_mtd}
\end{align*}
As in the Bayesian group lasso Gibbs sampler, the MTD is strictly positive and thus aperiodic, irreducible almost everywhere, and the chain is Harris recurrent. We will prove geometric ergodicity by establishing a drift and an associated minorization condition.

Define the drift function $V_{BSGL}: \real^p \times \real^K_+ \times \real^p_+ \times \real_+ \to [0, \infty)$ as,
\begin{equation}
\label{eq:drift_bsgl}
	V_{BSGL}(\beta, \tau^2, \gamma^2, \sigma^2)  = (y - X\beta)^T (y - X\beta) + \beta^T V_{\tau, \gamma}^{-1} \beta + \dfrac{\lambda^2_1}{4} \ds \sum_{k=1}^{K} \tau^2_{k} + \dfrac{\lambda_2^2}{4} \ds \sum_{k=1}^{K} \sum_{j=1}^{m_k} \gamma^2_{k,j}\,.
\end{equation}

\begin{theorem}
\label{thm:ge_bsgl}
If $n \geq 3$, the three variable Gibbs sampler for the Bayesian sparse group lasso is geometrically ergodic.
\end{theorem}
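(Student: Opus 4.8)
The plan is to prove geometric ergodicity by establishing the drift condition \eqref{eq:bfl_drift_condition} together with an associated minorization condition \eqref{eq:bfl_minorization} for the drift function $V_{BSGL}$ of \eqref{eq:drift_bsgl}, and then invoking the standard implication (\cite{jone:hobe:2001}, \cite{meyn:twee:2009}). The argument parallels those for the Bayesian group lasso and the Bayesian fused lasso; a structural simplification here is that $V_{\tau,\gamma}^{-1}$ is diagonal, with entries $\tau_k^{-2}+\gamma_{k,j}^{-2}$, so that $\beta^T V_{\tau,\gamma}^{-1}\beta=\sum_{k,j}\beta_{k,j}^2(\tau_k^{-2}+\gamma_{k,j}^{-2})\ge\sum_k\|\beta_{G_k}\|^2/\tau_k^2$ and $\ge\sum_{k,j}\beta_{k,j}^2/\gamma_{k,j}^2$.

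For the drift condition I would compute $\E[V_{BSGL}(\beta,\tau^2,\gamma^2,\sigma^2)\mid\beta_0,\tau_0^2,\gamma_0^2,\sigma_0^2]$ by iterating expectations in the reverse order of the Gibbs updates in \eqref{eq:bsgl_mtd}: over $\beta$, then over $(\tau^2,\gamma^2)$, then over $\sigma^2$. With $M=X^TX+V_{\tau,\gamma}^{-1}$, the Gaussian moments give $\E_\beta[(y-X\beta)^T(y-X\beta)+\beta^T V_{\tau,\gamma}^{-1}\beta]=y^Ty-y^TXM^{-1}X^Ty+p\sigma^2\le y^Ty+p\sigma^2$. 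The Inverse-Gaussian moment $\E[1/W]=\mu^{-1}+\lambda^{-1}$ (valid also in the degenerate case $\beta_{0,k,j}=0$, where the law is Inverse-Gamma, exactly as noted for the fused lasso) then turns $\E[\tfrac{\lambda_1^2}{4}\sum_k\tau_k^2\mid\beta_0,\sigma^2]$ into $\tfrac{\lambda_1}{4\sigma}\sum_k\|\beta_{0,G_k}\|+\tfrac{K}{4}$ and similarly for the $\gamma$ term; and the Inverse-Gamma moments give $\E[\sigma^2]=(A+B+2\xi)/(n+p+2\alpha-2)$ --- finite precisely because $n\ge 3$ makes $n+p+2\alpha-2>0$ --- and $\E[1/\sigma]=c_0(A+B+2\xi)^{-1/2}$, where $A=\|y-X\beta_0\|^2$, $B=\beta_0^T V_{\tau_0,\gamma_0}^{-1}\beta_0$, $C=\tfrac{\lambda_1^2}{4}\sum_k\tau_{0,k}^2+\tfrac{\lambda_2^2}{4}\sum_{k,j}\gamma_{0,k,j}^2$ (so $V_{BSGL}(\beta_0,\dots)=A+B+C$) and $c_0=c_0(n,p,\alpha)$. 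Assembling the three steps yields a bound of the form
\[
\E[V_{BSGL}\mid\beta_0,\tau_0^2,\gamma_0^2]\;\le\;\frac{p}{n+p+2\alpha-2}(A+B)\;+\;\frac{c_0\bigl(\lambda_1\sum_k\|\beta_{0,G_k}\|+\lambda_2\sum_{k,j}|\beta_{0,k,j}|\bigr)}{4\sqrt{A+B+2\xi}}\;+\;L_0 .
\]

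The crux is to bound the middle (cross) term by $\tfrac12 C$ plus a finite constant without letting the size of $c_0$ --- which grows like $\sqrt{n+p+2\alpha}$ --- enter the drift rate. The idea is a \emph{weighted} arithmetic--geometric mean inequality, $\lambda_1\|\beta_{0,G_k}\|\le\tfrac{1}{2s}\|\beta_{0,G_k}\|^2/\tau_{0,k}^2+\tfrac{s}{2}\lambda_1^2\tau_{0,k}^2$, summed over groups and coefficients using the bounds $\sum_k\|\beta_{0,G_k}\|^2/\tau_{0,k}^2\le B$ and $\sum_{k,j}\beta_{0,k,j}^2/\gamma_{0,k,j}^2\le B$; choosing the free parameter as $s=\rho\sqrt{A+B+2\xi}$ and using $B/(A+B+2\xi)\le 1$ collapses the cross term to at most $\tfrac{c_0}{4\rho}+\tfrac{c_0\rho}{2}C$, and picking the constant $\rho=1/c_0$ gives coefficient $\tfrac12$ on $C$ and pushes all $p$-growth into the additive constant. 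Since $A+B\le V_{BSGL}$ and $C\le V_{BSGL}$, this establishes \eqref{eq:bfl_drift_condition} with
\[
\phi_{BSGL}=\max\left\{\frac{p}{n+p+2\alpha-2},\,\frac12\right\}<1 ,\qquad L=L_0+\frac{c_0^2}{4},
\]
matching Remarks \ref{rem:bfl_drift_analyse} and \ref{rem:bgl_drift_analyse}; minimizing $V_{BSGL}$ then yields, as in the other two models, default starting values --- $\beta_0$ the frequentist sparse group lasso estimate of \eqref{eq:freq_sparse}, $\tau_{0,k}^2=2\|\beta_{0,G_k}\|/\lambda_1$, $\gamma_{0,k,j}^2=2|\beta_{0,k,j}|/\lambda_2$.

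For the minorization on $C_d=\{V_{BSGL}\le d\}$ I would use the usual device of bounding the transition density \eqref{eq:bsgl_mtd} below by its infimum over starting states in $C_d$, setting $Q(\cdot)\propto\int_{\cdot}\inf_{C_d}k_{BSGL}$ and $\epsilon=\int\inf_{C_d}k_{BSGL}$. This is eased by the fact that $f(\beta\mid\tau^2,\gamma^2,\sigma^2,y)$ does not depend on the starting state, so only $f(\tau^2,\gamma^2\mid\beta_0,\sigma^2,y)$ and $f(\sigma^2\mid\beta_0,\tau_0^2,\gamma_0^2,y)$ need to be controlled; these depend on the starting state only through $\|\beta_{0,G_k}\|$, $|\beta_{0,k,j}|$ and $A+B$, all of which are bounded on $C_d$ --- indeed $\tfrac{\lambda_1^2}{4}\sum_k\tau_{0,k}^2\le d$ and $\tfrac{\lambda_2^2}{4}\sum_{k,j}\gamma_{0,k,j}^2\le d$ bound $\tau_{0,k}^2$ and $\gamma_{0,k,j}^2$, which forces $V_{\tau_0,\gamma_0}^{-1}\succeq c(d)I$ and hence bounds $\|\beta_0\|$ via $B\le d$ --- and they stay away from the boundary values that would make the relevant Inverse-Gaussian and Inverse-Gamma densities vanish. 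Joint continuity and positivity of these densities then give $\epsilon>0$, and together with the drift condition and $d$ taken sufficiently large this completes the proof. I expect the cross-term estimate to be the only genuinely delicate step; everything else is bookkeeping with conjugate-family moments, essentially as in the Bayesian group lasso and Bayesian fused lasso proofs.
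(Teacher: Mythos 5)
Your proposal is correct in outline and reaches the same conclusion, but the key technical step --- controlling the cross term $\lambda_1\sum_k\|\beta_{0,G_k}\|/\sigma + \lambda_2\sum_{k,j}|\beta_{0,k,j}|/\sigma$ that the Inverse--Gaussian means produce --- is handled by a genuinely different device than the paper's. The paper applies $2ab\le a^2+b^2$ with weights tuned to $A=(1+\lambda_1^2/\lambda_2^2+\lambda_2^2/\lambda_1^2)(n+p+2\alpha)$ and $M=\max_k m_k$ \emph{before} integrating out $\sigma^2$, so that only the clean moment $\E[1/\sigma^2]=(n+p+2\alpha)/(A+B+2\xi)$ is needed and the factor $n+p+2\alpha$ cancels; it then converts $\beta_0^T\beta_0/(\beta_0^TV_{\tau_0,\gamma_0}^{-1}\beta_0)$ into $\tfrac{M}{4}\sum_k\tau_{0,k}^2+\tfrac14\sum_{k,j}\gamma_{0,k,j}^2$ via Lemma \ref{lem:tau_fraction_bound} and the harmonic--arithmetic mean inequality, at the price of the extra $\lambda$- and $M$-dependent terms in $\phi_{BSGL}$ (Remark \ref{rem:bsgl_drift_analyze}). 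You instead evaluate $\E[\sigma^{-1}]=\Gamma(a+\tfrac12)/(\Gamma(a)\sqrt{b})$ directly and apply a weighted AM--GM \emph{after} the $\sigma^2$-integration with the state-dependent weight $s=\rho\sqrt{A+B+2\xi}$, which absorbs the $\sqrt{n+p+2\alpha}$ growth of $c_0$ into the additive constant and yields the cleaner rate $\max\{p/(n+p+2\alpha-2),\tfrac12\}$. Both are valid; yours buys a simpler drift rate (free of $\lambda_1,\lambda_2,M$) at the cost of a half-integer Inverse--Gamma moment, while the paper's keeps all moments elementary and produces the explicit constants needed for Rosenthal-type quantitative bounds. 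Your starting values and the identification of $\beta_0$ with the sparse group lasso estimate match Appendix \ref{sub:bsgl_starting_values}.

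The minorization is where you are thinner than the paper. Your compactness-and-continuity argument does work, because on $C_d$ the effective parameters $(\|\beta_{0,G_k}\|, |\beta_{0,k,j}|, A+B)$ range over a compact set, but one assertion needs the explicit justification the paper supplies: that the Inverse--Gamma rate $A+B+2\xi$ is bounded \emph{below} away from zero uniformly on $C_d$ (otherwise the $\sigma^2$-conditional degenerates and the pointwise infimum vanishes). This follows, as in \eqref{eq:sigma_yty_bound}, from $V_{\tau_0,\gamma_0}^{-1}\succeq d_3^{-1}I_p$ on $C_d$ together with Lemma \ref{lem:yty_bound_minor}, giving $A+B\ge y^Ty-y^TX(X^TX+d_3^{-1}I_p)^{-1}X^Ty>0$ for $y\ne 0$. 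With that one line added, and noting that the infimum of the product of the two starting-state-dependent factors is bounded below by the product of their infima (the $\beta$-conditional being free of the starting state), your $\epsilon>0$ is secured and the proof is complete.
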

\begin{proof}
	See Appendix \ref{sec:bsgl_proof}.
\end{proof}

\begin{remark}\label{rem:bsgl_drift_analyze}
Define $M = \max_{k} m_k$. In Appendix \ref{sub:bsgl_drift_condition} the drift rate is determined to be
\[\phi_{BSGL} = \max \left\{\dfrac{p}{n + p + 2\alpha - 2}, \dfrac{\left(1 + \dfrac{\lambda_2^2}{\lambda_1^2} \right)}{2 \left(1 + \dfrac{\lambda_1^2}{\lambda_2^2} + \dfrac{\lambda_2^2}{\lambda_1^2}  \right)}, \dfrac{\left(1 + \dfrac{\lambda_1^2}{\lambda_2^2} \right)}{2M \left(1 + \dfrac{\lambda_1^2}{\lambda_2^2} + \dfrac{\lambda_2^2}{\lambda_1^2}  \right)}    \right\}\,. \]
Unlike the drift rate in the previous two models, the drift rate here can be lower than 1/2. However, it is likely that $p$ is large enough so that $\phi_{BSGL}$ is determined by the first term $p/(n + p + 2\alpha - 2)$. In this case again, the drift rate will tend to 1 as $p$ increases and thus convergence may be slower for large $p$ problems.
\end{remark}

\begin{remark}\label{rem:starting_values_bsgl}
A reasonable starting value for this Markov chain is $\beta_0$ being the sparse group lasso estimate, $\tau^2_{0,k} = 2\sqrt{\beta^T_{0,G_k} \beta_{0,G_k} }/ \lambda_1$ and $\gamma^2_{0,k} = 2 |\beta_{0,k,j} |/ \lambda_2$. See Appendix \ref{sub:bsgl_starting_values}.
\end{remark}



\section{Discussion} 
\label{sec:discussion}


As discussed in Section \ref{sec:introduction}, reliable estimation from MCMC output rests heavily on the rate of convergence of the Markov chain. Our geometric ergodicity results immediately implies the existence of a Markov chain CLT and strong consistency of some estimators of the asymptotic covariance matrix in this CLT. As a consequence, practitioners can use tools such that effective sample size to understand the quality of the Monte Carlo estimates.

Our results of geometric ergodicity hold under reasonable conditions. We require no conditions on $p$, and only need $n$ to be larger than 3.  However, our results suggest that it may be possible for the Gibbs samplers to converge at a slower rate if $p \gg n$. This agrees with the results in \cite{raj:spark:2015}. Users might then be inclined to first use Bayesian variable selection alternatives to these models. For example, \cite{xu:gho:2015} introduced the Bayesian variable selection alternatives to the group and the sparse group lasso by using spike-and-slab type priors. A natural direction for future research would be to investigate the convergence rate for the Gibbs samplers in these Bayesian variable selection models. 
\subsection*{Acknowledgements} 
\label{sub:acknowledgements}

The author is grateful to Galin Jones for helpful conversations and suggestions and Sakshi Arya for proof reading. The author was supported by the Alumni Fellowship, School of Statistics, University of Minnesota. 





\appendix

\section{Preliminaries} 
\label{sec:preliminaries}

In general, $\E_{(k)}$ represents expectation with respect to the MTD being studied in the section. Expectations with respect to a full conditional is  denoted by $\E_{\cdot}$. The index 0 on variables denotes starting values for the Markov chain.

Below are some properties of known distributions that will be used often.

\begin{itemize}
	\item If $1/X \sim $ Inverse-Gaussian$(a, b)$, then $\E[X] = 1/a + 1/b$.
	\item If $X \sim N_p(\mu, \Sigma)$, then $\E[XX^T] = \Sigma + \mu \mu^T$.
	\item If $X \sim $ Inverse-Gamma$(a,b)$, then $\E[X] = b/(a-1)$.
	\item If $X \sim $ Inverse-Gamma$(a,b)$, then $\E[1/X] = a/b$.
\end{itemize}
\label{sub:properties_of_distributions}


\subsection{Useful Lemmas} 
\label{sub:useful_lemmas}

We present some results that will used in the proofs of geometric ergodicity for all three samplers. Most of the results are generalizations of the results in \cite{khare:hobe:2013} and the proofs are presented here for completeness.
\begin{lemma}
\label{lem:beta_trace_bound}
Let $y, X,$ and $\beta$ be the observed $n \times 1$ response, the $n \times p$ matrix of covariates and the $p \times 1$ vector of regression coefficients. Let $\Sigma$ be the $p \times p$ positive definite matrix such that
\[\beta \sim N_p\left( (X^TX + \Sigma^{-1})^{-1} X^Ty, \sigma^2 (X^TX + \Sigma^{-1})^{-1} \right)\,, \] 
for $\sigma^2 >0$. Then,
\[\E \left[(y - X\beta)^T (y - X\beta) + \beta^T\Sigma^{-1} \beta \right] \leq y^Ty + p\sigma^2.\, \]
\end{lemma}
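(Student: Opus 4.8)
The plan is to compute the expectation exactly using the standard Gaussian moment identities, and then bound the result. Write $A = (X^TX + \Sigma^{-1})^{-1}$, so that $\beta \sim N_p(A X^T y, \sigma^2 A)$. First I would expand the quadratic form:
\[
(y - X\beta)^T(y - X\beta) + \beta^T \Sigma^{-1}\beta = y^Ty - 2 y^T X\beta + \beta^T (X^TX + \Sigma^{-1})\beta\,,
\]
and then take expectations term by term. Using $\E[\beta] = A X^T y$ and $\E[\beta\beta^T] = \sigma^2 A + A X^T y y^T X A$ (the identity $\E[\beta\beta^T] = \Sigma + \mu\mu^T$ listed in the preliminaries), the expectation of the last term becomes $\sigma^2 \operatorname{tr}\!\big((X^TX + \Sigma^{-1})A\big) + y^T X A (X^TX + \Sigma^{-1}) A X^T y$. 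Since $A$ is the inverse of $X^TX + \Sigma^{-1}$, the trace term is $\sigma^2 \operatorname{tr}(I_p) = p\sigma^2$, and the quadratic-in-$y$ term collapses to $y^T X A X^T y$.

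Collecting everything, the cross term $-2 y^T X \E[\beta] = -2 y^T X A X^T y$ combines with $+ y^T X A X^T y$ to leave
\[
\E\big[(y - X\beta)^T(y - X\beta) + \beta^T\Sigma^{-1}\beta\big] = y^Ty - y^T X A X^T y + p\sigma^2\,.
\]
To finish, I would argue that $y^T X A X^T y \ge 0$: the matrix $X A X^T = X (X^TX + \Sigma^{-1})^{-1} X^T$ is positive semidefinite because $A$ is positive definite (as the inverse of a sum of a positive semidefinite and a positive definite matrix). Dropping this nonnegative term gives the stated bound $y^Ty + p\sigma^2$.

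I do not anticipate a genuine obstacle here; the only mild subtlety is bookkeeping in the trace/quadratic-form algebra and being careful that $X^TX + \Sigma^{-1}$ is indeed invertible (it is, since $\Sigma^{-1}$ is positive definite and $X^TX$ is positive semidefinite, so the sum is positive definite). Everything else is a direct application of the four distributional facts quoted in the preliminaries, so the proof should be short.
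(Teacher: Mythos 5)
Your proposal is correct and follows essentially the same route as the paper's proof: expand the quadratic form, apply $\E[\beta\beta^T] = \sigma^2 A + AX^Tyy^TXA$ via the trace, observe the exact value $y^Ty - y^TXAX^Ty + p\sigma^2$, and drop the nonnegative term $y^TXAX^Ty$. No gaps.
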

\begin{proof}
Consider,
\begin{align*}
&\E \left[(y - X\beta)^T (y - X\beta) + \beta^T\Sigma^{-1} \beta \right] \\
& = y^Ty - 2y^TX \, \E\left[\beta \right] +  \E \left[ \beta^T(X^TX + \Sigma^{-1})\beta \right]\\
& = y^Ty - 2y^TX (X^TX + \Sigma^{-1})^{-1}X^Ty + \E \left[ tr(\beta^T(X^TX + \Sigma^{-1})\beta) \right]\\
& = y^Ty - 2y^TX (X^TX + \Sigma^{-1})^{-1}X^Ty + \, tr\left(\sigma^2 (X^TX + \Sigma^{-1}) (X^TX + \Sigma^{-1})^{-1} \right) \\
& \quad + \, tr\left((X^TX + \Sigma^{-1})(X^TX + \Sigma^{-1})^{-1}X^Tyy^TX(X^TX + \Sigma^{-1})^{-1} \right)\\
& = y^Ty - 2y^TX (X^TX + \Sigma^{-1})^{-1}X^Ty + p\sigma^2 + \,tr\left(y^TX(X^TX + \Sigma^{-1})^{-1}X^Ty \right)\\
& \leq y^Ty + p\sigma^2\,.
\end{align*}
\end{proof}

\begin{lemma}
\label{lem:tau_fraction_bound}
For $\alpha = (\alpha_1, \dots, \alpha_p) \in \real^p$ and $\delta = (\delta_1, \dots, \delta_p)$ such that $\delta_i \ne 0$,
\[\dfrac{\sum_{i=1}^{p} \alpha^2_i}{\sum_{i=1}^{p} \alpha^2_i/\delta^2_i} \leq \ds \sum_{i=1}^{p} \delta_i^2. \]
\end{lemma}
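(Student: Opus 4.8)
The plan is to clear the denominator and reduce the claim to the elementary observation that, for nonnegative real numbers, the sum of the products of paired terms is dominated by the product of the sums. Since each $\delta_i \neq 0$ we have $\delta_i^2 > 0$; assuming $\alpha \neq 0$ (the case $\alpha = 0$ being trivial under the convention that the left-hand side is then $0$), the denominator $\sum_{i=1}^p \alpha_i^2/\delta_i^2$ is strictly positive, so the asserted inequality is equivalent to
\[
\sum_{i=1}^p \alpha_i^2 \;\le\; \left( \sum_{i=1}^p \frac{\alpha_i^2}{\delta_i^2} \right)\!\left( \sum_{j=1}^p \delta_j^2 \right).
\]

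To establish this, put $a_i = \alpha_i^2/\delta_i^2 \ge 0$ and $b_j = \delta_j^2 > 0$, so that $a_i b_i = \alpha_i^2$ for each $i$. Expanding the right-hand side,
\[
\left( \sum_{i=1}^p a_i \right)\!\left( \sum_{j=1}^p b_j \right) = \sum_{i=1}^p a_i b_i + \sum_{i \ne j} a_i b_j = \sum_{i=1}^p \alpha_i^2 + \sum_{i \ne j} a_i b_j .
\]
Every term $a_i b_j$ in the off-diagonal sum is a product of nonnegative numbers and hence nonnegative, so the right-hand side is at least $\sum_{i=1}^p \alpha_i^2$. Dividing back through by $\sum_{i=1}^p \alpha_i^2/\delta_i^2$ gives the lemma.

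There is essentially no obstacle here: the argument is a single line of expansion followed by discarding nonnegative terms. The only minor point worth a remark is the degenerate case $\alpha = 0$, where the left-hand ratio is a formal $0/0$; in the applications the $\alpha_i$ are the coordinates of a nonzero vector, so it suffices to state the lemma under $\alpha \neq 0$. One can alternatively deduce the bound from the Cauchy--Schwarz inequality applied to the vectors $\bigl(|\alpha_i|/|\delta_i|\bigr)_i$ and $\bigl(|\alpha_i|\,|\delta_i|\bigr)_i$, combined with the same nonnegativity observation applied to $\sum_i \alpha_i^2\delta_i^2 \le \bigl(\sum_i \alpha_i^2\bigr)\bigl(\sum_i \delta_i^2\bigr)$, but the direct double-sum expansion above is shorter and self-contained.
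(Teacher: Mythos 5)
Your proof is correct and rests on the same elementary fact the paper uses, namely that $\sum_i a_ib_i \le (\sum_i a_i)(\sum_j b_j)$ for nonnegative $a_i=\alpha_i^2/\delta_i^2$ and $b_i=\delta_i^2$; the paper simply bounds each $\delta_i^2$ by $\sum_j\delta_j^2$ inside the numerator rather than expanding the product and discarding the off-diagonal terms. Your remark on the degenerate case $\alpha=0$ is a reasonable extra precaution that the paper leaves implicit.
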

\begin{proof}
Using the fact that the square of a number is non-negative, 
\begin{align*}
\dfrac{\sum_{i=1}^{p} \alpha^2_i}{\sum_{i=1}^{p} \alpha^2_i/\delta^2_i} &= \dfrac{\ds \sum_{i=1}^{p} \dfrac{\alpha^2_i}{\delta^2_i} \delta^2_i}{\ds \sum_{i=1}^{p} \alpha^2_i/\delta^2_i} \leq \dfrac{\ds \sum_{i=1}^{p} \dfrac{\alpha^2_i}{\delta^2_i} \left(\ds \sum_{i=1}^{p} \delta^2_i \right)}{\ds \sum_{i=1}^{p} \alpha^2_i/\delta^2_i} = \ds \sum_{i=1}^{p} \delta^2_i\,.
\end{align*}
\end{proof}

\begin{lemma}
\label{lem:inv_gau_dist}
For $\lambda^2, a^2, \sigma^2 > 0$, if $X$ has a probability density function $f(x)$ such that
\[f(x) \propto x^{-1/2} \exp \left\{ -\dfrac{\lambda^2 x}{2} - \dfrac{a^2}{2 \sigma^2 x} \right\}\,, \] then $1/X \sim $ Inverse-Gaussian distribution with mean parameter $\sqrt{\lambda^2 \sigma^2/a^2}$ and scale parameter $\lambda^2$.
\end{lemma}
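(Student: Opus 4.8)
The plan is to proceed by a direct change of variables from $X$ to $Y = 1/X$ and then match the resulting kernel against the Inverse-Gaussian density convention fixed earlier in the paper. Since $X$ is supported on $(0, \infty)$, the map $x \mapsto 1/x$ is a smooth bijection of $(0, \infty)$ onto itself with Jacobian $|dx/dy| = 1/y^2$. First I would substitute $x = 1/y$ into $f$ and multiply by this Jacobian, obtaining the density of $Y$ up to a proportionality constant as
\[
g(y) \;\propto\; (1/y)^{-1/2}\, \exp\!\left\{-\dfrac{\lambda^2}{2y} - \dfrac{a^2 y}{2\sigma^2}\right\}\cdot \dfrac{1}{y^2}
\;=\; y^{-3/2}\, \exp\!\left\{-\dfrac{\lambda^2}{2y} - \dfrac{a^2 y}{2\sigma^2}\right\}\,.
\]

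Next I would expand the kernel of the target Inverse-Gaussian density so that it can be compared term by term with $g$. To avoid the notational clash between the constant $a$ in the statement and the mean parameter in the Inverse-Gaussian convention, I would write the Inverse-Gaussian with mean parameter $\mu$ and scale parameter $b$, whose density is proportional to $y^{-3/2}\exp\{-b(y-\mu)^2/(2\mu^2 y)\}$. Expanding the quadratic gives
\[
-\dfrac{b(y-\mu)^2}{2\mu^2 y} \;=\; -\dfrac{b\,y}{2\mu^2} + \dfrac{b}{\mu} - \dfrac{b}{2y}\,.
\]
The constant term $b/\mu$ can be absorbed into the normalizing constant, so the Inverse-Gaussian kernel is proportional to $y^{-3/2}\exp\{-b y/(2\mu^2) - b/(2y)\}$, which has exactly the same functional form as the expression for $g(y)$ derived above.

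Finally I would identify the parameters by matching the coefficients of $y$ and of $1/y$ in the two exponents. Comparing the $1/y$ terms gives $b/2 = \lambda^2/2$, hence $b = \lambda^2$; comparing the $y$ terms gives $b/(2\mu^2) = a^2/(2\sigma^2)$, and substituting $b = \lambda^2$ yields $\mu^2 = \lambda^2\sigma^2/a^2$, i.e. $\mu = \sqrt{\lambda^2\sigma^2/a^2}$. These are precisely the claimed mean and scale parameters, so $1/X$ is Inverse-Gaussian as asserted. There is no genuine analytic obstacle here; the only points requiring care are the bookkeeping of the Jacobian factor $1/y^2$ together with the $x^{-1/2}$ prefactor, which combine to produce exactly the $y^{-3/2}$ exponent characteristic of the Inverse-Gaussian, and keeping the paper's two parametrizations straight when reading off $\mu$ and $b$.
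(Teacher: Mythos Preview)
Your proposal is correct and follows essentially the same route as the paper: a change of variable $z=1/x$ with Jacobian $1/z^{2}$ to obtain the $z^{-3/2}$ prefactor, followed by algebraic identification with the Inverse-Gaussian kernel. The only cosmetic difference is that the paper completes the square directly into the form $\exp\{-\lambda^{2}(z-\mu)^{2}/(2\mu^{2}z)\}$, whereas you expand the Inverse-Gaussian kernel and match the coefficients of $y$ and $1/y$; both arrive at $\mu=\sqrt{\lambda^{2}\sigma^{2}/a^{2}}$ and scale $\lambda^{2}$.
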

\begin{proof}
For the change of variable $z = 1/x$,
\begin{align*}
f(z) & \propto z^{-2} z^{\frac{1}{2}} \exp \left\{ - \dfrac{\lambda^2}{2 z} - \dfrac{a^2 z}{2 \sigma^2}   \right\}
= z^{-\frac{3}{2}}\exp \left\{ - \dfrac{a^2 \left(\frac{\lambda^2\sigma^2}{a^2} + z^2 \right)}{2 \sigma^2 z}  \right\}\\
& = \exp \left\{- \sqrt{\dfrac{\lambda^2 a^2}{\sigma^2} }  \right\} z^{-\frac{3}{2}}\exp \left\{ - \dfrac{a^2 \left(\frac{\lambda^2\sigma^2}{a^2} - 2\sqrt{\frac{\lambda^2\sigma^2}{a^2}}z + z^2 \right)}{2 \sigma^2 z}  \right\} \\
& \propto z^{-\frac{3}{2}} \exp \left\{-\dfrac{\lambda^2  \left(\frac{\lambda^2\sigma^2}{a^2} - 2\sqrt{\frac{\lambda^2\sigma^2}{a^2}}z + z^2 \right) }{2 \frac{\lambda^2 \sigma^2}{a^2 }z }  \right\}\,.
\end{align*}
Thus, $Z \sim $ Inverse-Gaussian with mean parameter $\sqrt{\lambda^2 \sigma^2/a^2}$ and scale parameter $\lambda^2$.
\end{proof}

\begin{lemma}
\label{lem:inv_gauss_minor}
If $1/X \sim $ Inverse-Gaussian with mean parameter $\sqrt{\lambda^2 \sigma^2/a^2}$ and scale parameter $\lambda^2$ and $a^2 \leq d^2$ for some $d^2 > 0$, then
\[f(x) \geq \exp \left\{-\sqrt{\dfrac{\lambda^2 d^2}{\sigma^2} } \right\} q(x)\,, \]
where $f(x)$ is the pdf of $X$ and $q(x)$ is the pdf of the reciprocal of the Inverse-Gaussian distribution with mean parameter $\sqrt{\lambda^2 \sigma^2/d^2}$ and scale parameter $\lambda^2$.
\end{lemma}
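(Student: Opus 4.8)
The plan is to write both densities out explicitly via Lemma \ref{lem:inv_gau_dist} and then compare them pointwise. Applying that lemma once with the relevant parameter taken to be $a$ and once with it taken to be $d$, together with the change-of-variables formula for $x \mapsto 1/x$, shows that
\[
f(x) = \sqrt{\frac{\lambda^2}{2\pi}}\,\exp\!\left\{\sqrt{\frac{\lambda^2 a^2}{\sigma^2}}\right\} x^{-1/2}\exp\!\left\{-\frac{\lambda^2 x}{2} - \frac{a^2}{2\sigma^2 x}\right\},
\]
and that $q(x)$ is given by the very same expression with $a$ replaced by $d$. The point being used here is that the normalizing constant of an Inverse-Gaussian density depends only on its scale parameter $\lambda^2$ and not on its mean, so the only difference between the two normalizers is the ``completed-square'' prefactor $\exp\{\sqrt{\lambda^2 a^2/\sigma^2}\}$ for $f$ versus $\exp\{\sqrt{\lambda^2 d^2/\sigma^2}\}$ for $q$.

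With these two formulas in hand, the proof reduces to a one-line estimate of the ratio. Dividing, the $\sqrt{\lambda^2/(2\pi)}$, the $x^{-1/2}$, and the $\exp\{-\lambda^2 x/2\}$ factors cancel, leaving
\[
\frac{f(x)}{q(x)} = \exp\!\left\{\sqrt{\frac{\lambda^2 a^2}{\sigma^2}} - \sqrt{\frac{\lambda^2 d^2}{\sigma^2}}\right\}\exp\!\left\{\frac{d^2 - a^2}{2\sigma^2 x}\right\}.
\]
Because $a^2 \leq d^2$ and $x > 0$, the second exponential is at least $1$, and because $\sqrt{\lambda^2 a^2/\sigma^2} \geq 0$, the first exponential is at least $\exp\{-\sqrt{\lambda^2 d^2/\sigma^2}\}$. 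Multiplying these bounds gives $f(x) \geq \exp\{-\sqrt{\lambda^2 d^2/\sigma^2}\}\, q(x)$ for every $x > 0$, as claimed.

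I do not expect any genuine obstacle here; the only thing requiring care is the bookkeeping of the normalizing constants. One must resist treating $f$ and $q$ as merely proportional to their exponential kernels, since the entire content of the lemma is the sharp multiplicative gap $\exp\{-\sqrt{\lambda^2 d^2/\sigma^2}\}$, and that gap is precisely the ratio of the two normalizers (after discarding the harmless factor $\exp\{(d^2-a^2)/(2\sigma^2 x)\} \geq 1$). If one preferred not to invoke the Inverse-Gaussian normalizing constant directly, an equivalent route is to write $f(x) = x^{-2} g(1/x)$ and $q(x) = x^{-2} h(1/x)$, where $g$ and $h$ are honest Inverse-Gaussian densities sharing the scale $\lambda^2$, and then use the mean-independence of the Inverse-Gaussian normalizer to reach the same two displayed formulas.
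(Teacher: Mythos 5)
Your proof is correct and follows essentially the same route as the paper: write the density of $X$ with the completed-square prefactor $\exp\{\sqrt{\lambda^2 a^2/\sigma^2}\}$ extracted, note that the remaining normalizer $\sqrt{\lambda^2/(2\pi)}$ is common to $f$ and $q$, and then use $a^2\le d^2$ together with the nonnegativity of $\sqrt{\lambda^2 a^2/\sigma^2}$. Your ratio formulation is if anything slightly tidier, since it makes explicit the final comparison $\exp\{-a^2/(2\sigma^2 x)\}\ge\exp\{-d^2/(2\sigma^2 x)\}$ that the paper's last displayed step passes over as an equality.
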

\begin{proof}
By Lemma \ref{lem:inv_gau_dist}, we have
\begin{align*}
 f(x) & =  \sqrt{\dfrac{\lambda^2}{2\pi}} \left(x\right)^{-\frac{1}{2}} \exp \left\{-\dfrac{\lambda^2  \left(\dfrac{\lambda^2\sigma^2}{a^2} - 2\sqrt{\dfrac{\lambda^2\sigma^2}{a^2}}\dfrac{1}{x} + \dfrac{1}{x^2} \right) }{2 \dfrac{\lambda^2 \sigma^2}{a^2 }\dfrac{1}{x} }  \right\}\\
 & =  \exp \left\{ \sqrt{\dfrac{\lambda^2 a^2}{\sigma^2} }   \right\} \sqrt{\dfrac{\lambda^2}{2\pi}} \left(x\right)^{-\frac{1}{2}} \exp \left\{-\dfrac{\lambda^2  \left(\dfrac{\lambda^2\sigma^2}{a^2} + \dfrac{1}{x^2} \right) }{2 \dfrac{\lambda^2 \sigma^2}{a^2 }\dfrac{1}{x} }  \right\}\\
 & \geq  \sqrt{\dfrac{\lambda^2}{2\pi}} \left(x\right)^{-\frac{1}{2}} \exp \left\{-\dfrac{\lambda^2  \left(\dfrac{\lambda^2\sigma^2}{a^2} + \dfrac{1}{x^2} \right) }{2 \dfrac{\lambda^2 \sigma^2}{a^2 }\dfrac{1}{x} }  \right\}
 = \exp \left\{- \sqrt{\dfrac{\lambda^2 d^2}{\sigma^2} }   \right\}  q(x)\,.
\end{align*}
\end{proof}

\begin{lemma}
\label{lem:yty_bound_minor}
Let $y, X,$ and $\beta$ be the observed $n \times 1$ response, the $n \times p$ matrix of covariates and the $p \times 1$ vector of regression coefficients respectively. Let $\Sigma$ be a $p \times p$ positive definite matrix. Then,
\[(y - X\beta)^T(y - X\beta) + \beta^T \Sigma^{-1} \beta \geq y^Ty - y^T X \left( X^TX + \Sigma^{-1} \right)^{-1} X^Ty\,. \]
\end{lemma}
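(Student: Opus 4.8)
The plan is a direct completion of the square in $\beta$. First I would expand the left-hand side as a quadratic form in $\beta$:
\[
(y - X\beta)^T(y - X\beta) + \beta^T \Sigma^{-1} \beta = y^Ty - 2 y^T X \beta + \beta^T \left( X^TX + \Sigma^{-1} \right) \beta\,.
\]
Write $A = X^TX + \Sigma^{-1}$. Since $X^TX$ is positive semidefinite and $\Sigma^{-1}$ is positive definite (as $\Sigma$ is positive definite), $A$ is positive definite and hence invertible, so $A^{-1}X^Ty$ is well defined.

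Next I would complete the square, using the identity
\[
\beta^T A \beta - 2 y^T X \beta = \left( \beta - A^{-1} X^Ty \right)^T A \left( \beta - A^{-1} X^Ty \right) - y^T X A^{-1} X^T y\,,
\]
which is verified by expanding the right-hand side. Substituting back gives
\[
(y - X\beta)^T(y - X\beta) + \beta^T \Sigma^{-1} \beta = \left( \beta - A^{-1} X^Ty \right)^T A \left( \beta - A^{-1} X^Ty \right) + y^Ty - y^T X A^{-1} X^T y\,.
\]
Since $A$ is positive definite, the first term on the right is nonnegative for every $\beta \in \real^p$, and dropping it yields the claimed inequality; note also that equality holds precisely when $\beta = (X^TX + \Sigma^{-1})^{-1} X^T y$, i.e.\ at the mean of the full conditional of $\beta$ appearing in Lemma \ref{lem:beta_trace_bound}.

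I do not anticipate any genuine obstacle here: the only point requiring a word of justification is that $A = X^TX + \Sigma^{-1}$ is invertible, which follows immediately from positive definiteness of $\Sigma^{-1}$ together with positive semidefiniteness of $X^TX$, and this holds with no restriction on $n$, $p$, or the rank of $X$.
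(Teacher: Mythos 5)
Your proposal is correct and follows essentially the same route as the paper: the paper's proof is exactly the completion of the square in $\beta$ around $(X^TX+\Sigma^{-1})^{-1}X^Ty$, followed by dropping the nonnegative quadratic term. Your added remarks on the invertibility of $X^TX+\Sigma^{-1}$ and the equality case are correct but not needed beyond what the paper states.
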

\begin{proof}
The proof mainly requires completing the square in the following way,
\begin{align*}
& (y - X\beta)^T(y - X\beta) + \beta^T \Sigma^{-1}\beta \\
& = y^Ty - 2y^TX(X^TX + \Sigma^{-1})(X^TX + \Sigma^{-1})^{-1}\beta + \beta^T(X^TX + \Sigma^{-1}) \beta \\
& \quad +  y^TX (X^TX + \Sigma^{-1})^{-1}(X^TX + \Sigma^{-1})(X^TX + \Sigma^{-1})^{-1}X^Ty\\
& \quad  -  y^TX (X^TX + \Sigma^{-1})^{-1}(X^TX + \Sigma^{-1})(X^TX + \Sigma^{-1})^{-1}X^Ty\\
& = y^Ty -  y^TX(X^TX + \Sigma^{-1})^{-1}X^Ty\\
& \quad + (\beta - (X^TX + \Sigma^{-1})^{-1} X^Ty)^T (X^TX + \Sigma^{-1}) (\beta - (X^TX + \Sigma^{-1})^{-1} X^Ty)\\
& \geq y^Ty - y^TX(X^TX + \Sigma^{-1})^{-1} X^Ty\, .
\end{align*}
\end{proof}


\section{Bayesian Fused Lasso Prior} 
\label{sec:bfl_prior}

\subsection{Propriety of the Prior} 
\label{sub:propriety_of_prior}
First note that $\det\left(\Sigma_{\tau, w}\right) = \left(\det \left( \Sigma_{\tau, w}^{-1} \right) \right)^{-1}$. We decompose $\Sigma^{-1}_{\tau, w}$ into 
\begin{equation}
\label{eq:sigma_split}
	 \Sigma^{-1}_{\tau, w} = L_1 + L_2\,,
\end{equation}
where 
\[L_1 = \text{diag}\left(\dfrac{1}{2\tau^2_1}, \dfrac{1}{2\tau^2_2}, \dots, \dfrac{1}{2\tau^2_p} \right) \; \text{and}\,, \]
\begin{equation*}
L_2 = \left[\begin{array}{ccccc} \frac{1}{2\tau^2_1} + \frac{1}{w^2_1} & -\frac{1}{w^2_1} & 0 & \dots & 0 \\ -\frac{1}{w^2_1} & \frac{1}{2\tau^2_2} + \frac{1}{w^2_1} + \frac{1}{w^2_2} & -\frac{1}{w^2_2} & \dots  & 0 \\ 0 & -\frac{1}{w_2^2} & \frac{1}{2\tau^2_3} + \frac{1}{w^2_2} + \frac{1}{w^2_3} & \dots &  0\\ \dots & \dots & \dots & \ddots & \dots \\ 0 & 0 & \dots & \frac{1}{2\tau^2_{p-1}} + \frac{1}{w^2_{p-2}} + \frac{1}{w^2_{p-1}}  & -\frac{1}{w^2_{p-1}}  \\  0 & 0 & \dots & -\frac{1}{w^2_{p-1}} & \frac{1}{2\tau^2_p} + \frac{1}{w^2_{p-1}} \end{array}   \right]\,.
\end{equation*}
The diagonal matrix $L_1$ is clearly positive definite. The tridiagonal matrix $L_2$ is also positive definite since $L_2$ is real symmetric, has positive diagonals, and is strictly diagonally dominant \cite[Theorem 1.2]{andj:dafo:2011}. Here the condition of strict diagonal dominance is satisfied since 
\[ \dfrac{1}{2 \tau^2_{i} } + \dfrac{1}{w^2_{i-1}} + \dfrac{1}{w^2_{i}} > \dfrac{1}{w^2_{i-1}} + \dfrac{1}{w^2_{i}}\,. \] 

Thus, 
\begin{align*}
	\det(\Sigma^{-1}_{\tau, w}) & = \det(L_1 + L_2) \geq \det(L_1) + \det(L_2) \geq \det(L_1) = \prod_{i=1}^{p} \left(\dfrac{1}{2 \tau^2_i}  \right)\\
	\Rightarrow \;\left(\det\left( \Sigma^{-1}_{\tau, w} \right)\right)^{-1/2} & \leq \prod_{i=1}^{p} \left( 2\tau^2_{i} \right)^{1/2}\,.
\end{align*}

Thus, the joint prior on $(\tau^2, w^2)$ satisfies,
\begin{align*}
	\pi(\tau^2, w^2) & \propto \det\left(\Sigma_{\tau, w} \right)^{1/2} \left( \prod_{i=1}^{p} \left(\tau^2_i \right)^{-1/2} e^{-\lambda_1 \tau^2_i/2} \right) \left(\prod_{i=1}^{p-1} \left(w^2_i \right)^{-1/2}e^{-\lambda_2 w^2_i/2} \right)\\
	& \leq \prod_{i=1}^{p} \left( 2\tau^2_{i} \right)^{1/2} \left( \prod_{i=1}^{p} \left(\tau^2_i \right)^{-1/2} e^{-\lambda_1 \tau^2_i/2} \right) \left(\prod_{i=1}^{p-1} \left(w^2_i \right)^{-1/2}e^{-\lambda_2 w^2_i/2} \right)\\
	& = 2^{p/2}  \left( \prod_{i=1}^{p} e^{-\lambda_1 \tau^2_i/2} \right) \left(\prod_{i=1}^{p-1} \left(w^2_i \right)^{-1/2}e^{-\lambda_2 w^2_i/2} \right)\,.
\end{align*}
This is the product of $p$ exponentials densities and $p-1$ Gamma densities. Thus, the prior is proper.


\subsection{Validity of the Prior} 
\label{sub:validity_of_the_prior}

In this section we demonstrate that our choice of prior in the Bayesian fused lasso leads to the Laplace prior in \eqref{eq:laplace_fused}. First we expand $\beta^T \Sigma^{-1}_{\tau, w} \beta$ in the following way:
\begin{align*}
\beta ^T \Sigma^{-1}_{\tau, w} \beta &= \left[\begin{array}{c}
\beta_1 \left(\dfrac{1}{\tau^2_1} + \dfrac{1}{w_1^2} \right) - \dfrac{\beta_2}{w_1^2} \\
-\dfrac{\beta_1}{w_1^2} + \beta_2 \left(\dfrac{1}{\tau^2_2} + \dfrac{1}{w_1^2} + \dfrac{1}{w_2^2} \right) - \dfrac{\beta_3}{w_2^2}\\
-\dfrac{\beta_2}{w_2^2} + \beta_3\left(\dfrac{1}{\tau^2_3} + \dfrac{1}{w^2_2} + \dfrac{1}{w_3^2}  \right) - \dfrac{\beta_4}{w_3^2}\\
 \vdots\\
 -\dfrac{\beta_{p-1}}{w^2_{p-1}} +  \beta_p \left(\dfrac{1}{\tau^2_p}  + \dfrac{1}{w^2_{p-1}} \right)
\end{array} \right]^T  \left[\begin{array}{c}
	\beta_1 \\ \beta_2 \\ \beta_3 \\ \vdots \\ \beta_p
\end{array} \right]\\
& = \beta_1^2 \left(\dfrac{1}{\tau^2_1} + \dfrac{1}{w_1^2} \right) - \dfrac{\beta_1\beta_2}{w_1^2} -\dfrac{\beta_1 \beta_2}{w_1^2} + \beta_2^2 \left(\dfrac{1}{\tau^2_2} + \dfrac{1}{w_1^2} + \dfrac{1}{w_2^2} \right) - \dfrac{\beta_2\beta_3}{w_2^2}\\
& \quad -\dfrac{\beta_2\beta_3}{w_2^2} + \beta_3^2\left(\dfrac{1}{\tau^2_3} + \dfrac{1}{w^2_2} + \dfrac{1}{w_3^2}  \right) - \dfrac{\beta_3\beta_4}{w_3^2} + \dots  -\dfrac{\beta_{p-1} \beta_p }{w^2_{p-1}} +  \beta^2_p \left(\dfrac{1}{\tau^2_p}  + \dfrac{1}{w^2_{p-1}} \right) \\
& = \ds \sum_{i=1}^{p} \dfrac{\beta_i^2}{\tau^2_i} + \dfrac{\beta_1^2 + \beta_2^2 - 2 \beta_1 \beta_2}{w_1^2} + \dfrac{\beta_2^2 + \beta_3^2 - 2 \beta_2 \beta_3}{w_2^2} + \dots + \dfrac{\beta_p^2 + \beta_{p-1}^2 - 2 \beta_p \beta_{p-1}}{w^2_{p-1}}\\
& = \ds \sum_{i=1}^{p} \dfrac{\beta_i^2}{\tau^2_i}  + \ds \sum_{i=1}^{p-1} \dfrac{(\beta_{i+1} - \beta_{i})^2}{w_i^2} \numberthis \label{eq:fused_beta_expansion}\,.
\end{align*}
Using \eqref{eq:fused_beta_expansion},
\begin{align*}
	& \pi(\beta \mid \sigma^2) \\ 
	&\propto \int_{\real_+^p} \int_{\real_+^{p-1}} (2\pi \sigma^2)^{-\frac{p}{2} } \det\left( \Sigma^{-1}_{\tau, w} \right)^{1/2}\exp \left\{-\dfrac{\beta^T \Sigma^{-1}_{\tau, w} \beta}{2 \sigma^2}   \right\} \\
	& \quad  \times  \; \;\det\left(\Sigma_{\tau, w} \right)^{1/2} \left( \prod_{i=1}^{p} \left(\tau^2_i \right)^{-1/2} e^{-\lambda_1 \tau^2_i/2} \right) \left(\prod_{i=1}^{p-1} \left(w^2_i \right)^{-1/2}e^{-\lambda_2 w^2_i/2} \right) d w^2 d \tau^2\\
	& \propto \int \prod_{i=1}^{p} \left(\tau^2_i\right)^{-1/2} \exp \left\{-\dfrac{\lambda_1\tau^2_i}{2} - \dfrac{\beta_i^2}{2 \sigma^2 \tau^2_i}   \right\} d\tau^2 \int \prod_{i=1}^{p-1} \left(w^2_i\right)^{-1/2} \exp \left\{-\dfrac{\lambda_2w^2_i}{2} - \dfrac{ \left(\beta_{i+1} - \beta_{i} \right)^2 }{2 \sigma^2 w^2_i}   \right\} dw^2\\
	& = \exp \left\{- \dfrac{\lambda_1}{\sigma}\ds \sum_{i=1}^{p} |\beta_i|  - \dfrac{\lambda_2}{\sigma}\ds \sum_{i=1}^{p-1} |\beta_{i+1} - \beta_i| \right\} \int \prod_{i=1}^{p} \left(\tau^2_i\right)^{-1/2} \exp \left\{-\dfrac{\lambda_1\tau^2_i}{2} - \dfrac{\beta_i^2}{2 \sigma^2 \tau^2_i} + \dfrac{\lambda_1}{\sigma} |\beta_i|    \right\} d\tau^2 \\
	& \quad \times \int \prod_{i=1}^{p-1} \left(w^2_i\right)^{-1/2} \exp \left\{-\dfrac{\lambda_2w^2_i}{2} - \dfrac{ \left(\beta_{i+1} - \beta_{i} \right)^2 }{2 \sigma^2 w^2_i} + \dfrac{\lambda_2}{\sigma} |\beta_{i+1} - \beta_i|    \right\} dw^2 \\
	& \propto \exp \left\{- \dfrac{\lambda_1}{\sigma}\ds \sum_{i=1}^{p} |\beta_i|  - \dfrac{\lambda_2}{\sigma}\ds \sum_{i=1}^{p-1} |\beta_{i+1} - \beta_i| \right\}\,,
\end{align*}
where the last equality is due to the integrands being the densities of the reciprocal of Inverse-Gaussian distributions; see Lemma \ref{lem:inv_gau_dist}. 

\section{Proof of Geometric Ergodicity in Bayesian Fused Lasso} 
\label{sec:bfl_proof}

We will establish geometric ergodicity of the three variable Gibbs sampler for the Bayesian fused lasso by establishing a drift condition and an associated minorization condition.

\subsection{Drift Condition} 
\label{sub:bfl_drift_condition}

Consider the drift function 
\begin{equation}
\label{eq:drift_function_2}
	V_{BFL}(\beta, \tau^2, w^2, \sigma^2) = (y - X\beta)^T(y-X\beta) + \beta^T \Sigma_{\tau, w}^{-1} \beta + \dfrac{\lambda_1^2}{4}\ds \sum_{i=1}^{p} \tau^2_i + \dfrac{\lambda_2^2}{4}\ds \sum_{i=1}^{p-1}w^2_{i}\,.
\end{equation}
Then $V_{BFL}:\real^p \times \real^p_+ \times \real^{p-1}_+ \times \real_+ \to [0, \infty)$. To establish the drift condition we need to show that there exists a $0 < \phi_{BFL} < 1$ and $L_{BFL} > 0$ such that,
\[\E_{(k)}\left[ V_{BFL}(\beta, \tau^2, w^2, \sigma^2) \, | \, \beta_0, \tau^2_0, w^2_0, \sigma^2_0   \right] \leq \phi_{BFL} V_{BFL}(\beta_0, \tau^2_0, w_0^2, \sigma^2_0) + L_{BFL}\,, \]
for every $(\beta_0, \tau^2_0, w_0^2, \sigma^2_0) \in \real^p \times \real_+^p \times \real_+^{p-1} \times \real_+$. The left hand side is the expectation with respect to the MTD, that is,
\begin{align*}
	&\E_{(k)}\left[ V_{BFL}(\beta, \tau^2, w^2,  \sigma^2) \, \mid \, \beta_0, \tau^2_0, w^2_0, \sigma^2_0   \right] \\
	 &= \int  V_{BFL}(\beta, \tau^2, w^2, \sigma^2) f(\sigma^2 \mid \beta_0, \tau^2_0, w^2_0, y)  f(\tau^2, w^2 \mid \beta_0, \sigma^2,y) f(\beta \mid \tau^2, w^2, \sigma^2,y) d\beta \, d\tau^2 \, dw^2 \, d\sigma^2\\
	 &= \int f(\sigma^2 | \beta_0, \tau^2_0, w_0^2, y) \int f(\tau^2 , w^2 |\beta_0,  \sigma^2,y)
 \int V_{BFL}(\beta, \tau^2, w^2, \sigma^2) f(\beta | \tau^2, w^2,  \sigma^2, y) d \beta \, d\tau^2 \, dw^2 \, d\sigma^2\\
	 & =  \E_{\sigma^2} \left[  \E_{\tau^2, w^2} \left[  \E_{\beta} \left[V_{BFL}(\beta, \tau^2, w^2, \sigma^2)  \, \mid \, \tau^2,w^2, \sigma^2, y \right]  \, \mid \, \beta_0, \sigma^2, y \right] \, \mid \, \beta_0, \tau^2_0, w^2_0,  y  \right]\,.
\end{align*}
We will evaluate these sequentially, starting with the innermost expectation.  By Lemma \ref{lem:beta_trace_bound},
\begin{align*}
\E_{(k)} \left[V_{BFL}(\beta, \tau^2, w^2, \sigma^2) \mid \tau^2, w^2,  \sigma^2, y \right] 
& \leq y^Ty + \dfrac{\lambda_1^2}{4} \ds \sum_{i=1}^{p} \tau^2_i + \dfrac{\lambda_2^2}{4}\ds \sum_{i=1}^{p-1}w^2_{i} + p\sigma^2\,.
\end{align*}

Next we move on to the expectation with respect to the full conditional of $\tau^2, w^2$. Note that
\begin{align*}
& \E_{\tau^2, w^2} \left[  \E_{\beta} \left[V_{BFL}(\beta, \tau^2, w^2,  \sigma^2)  \, \mid \, \tau^2, w^2, \sigma^2, y \right]  \, \mid \, \beta_0, \sigma^2, y \right] \\
& \leq y^Ty + p \sigma^2 + \dfrac{\lambda_1^2}{4} \sum_{i=1}^{p} \left[ \sqrt{ \dfrac{\beta^2_{0,i} }{\lambda_1^2 \sigma^2} } + \dfrac{1}{\lambda_1^2}  \right] + \dfrac{\lambda^2_2}{4} \ds \sum_{i=1}^{p-1} \left[ \sqrt{\dfrac{(\beta_{0,i+1} - \beta_{0,i} )^2}{\lambda_2^2 \sigma^2} } + \dfrac{1}{\lambda^2_2} \right]\,, \numberthis \label{eq:bfl_till_tau_1}
\end{align*}
using the properties of the Inverse-Gaussian distribution mentioned in Appendix \ref{sub:properties_of_distributions}. Since for $a,b > 0$, $2ab \leq a^2 + b^2$, 
\begin{align*}
& \E_{\tau^2, w^2} \left[  \E_{\beta} \left[V_{BFL}(\beta, \tau^2, w^2,  \sigma^2)  \, \mid \, \tau^2, w^2, \sigma^2, y \right]  \, \mid \, \beta_0, \sigma^2, y \right] \\
& \leq y^Ty + p \sigma^2 + \dfrac{\lambda_1^2}{4} \sum_{i=1}^{p} \left[ \dfrac{ \beta_{0,i}^2}{2 \sigma^2 (n + p + 2\alpha)}  + \dfrac{(n + p + 2\alpha)}{2 \lambda_1^2} + \dfrac{1}{\lambda_1^2} \right]\\
& \quad + \dfrac{\lambda^2_2}{4} \ds \sum_{i=1}^{p-1} \left[ \dfrac{(\beta_{0,i+1} - \beta_{0,i} )^2}{2 \sigma^2 (n + p + 2\alpha)} + \dfrac{(n + p + 2\alpha)}{2 \lambda^2_2} + \dfrac{1}{\lambda^2_2} \right]\\
& \leq y^Ty + \dfrac{p}{4} \left(2 + (n + p + 2\alpha)  \right) + p \sigma^2 + \dfrac{\lambda_1^2}{8 (n + p + 2\alpha)} \ds \sum_{i=1}^{p} \dfrac{\beta^2_{0,i} }{\sigma^2} + \dfrac{\lambda_2^2 }{8 (n + p + 2\alpha) }  \ds \sum_{i=1}^{p-1}  \dfrac{(\beta_{0,i+1} - \beta_{0,i})^2}{\sigma^2}\,.
\end{align*}
Finally, the last expectation,
\begin{align*}
& \E_{\sigma^2} \left[  \E_{\tau^2, w^2} \left[  \E_{\beta} \left[V_{BFL}(\beta, \tau^2, w^2,  \sigma^2)  \, \mid \, \tau^2, w^2, \sigma^2, y \right]  \, \mid \, \beta_0, \sigma^2, y \right] \, \mid \, \beta_0, \tau^2_0, w^2_0, y  \right]\\
& \leq y^Ty  + \dfrac{p}{4} \left(n + p + 2\alpha + 2\right) + p \E_{\sigma^2}[\sigma^2 \mid \beta_0, \tau^2_0, w_0^2, y]+ \dfrac{\lambda_1^2}{8 (n + p + 2\alpha)} \ds \sum_{i=1}^{p} \E_{\sigma^2} \left[\dfrac{\beta^2_{0,i} }{\sigma^2} \mid \beta_0, \tau^2_0, w^2_0, y \right]\\
& \quad + \dfrac{\lambda_2^2}{8 (n + p + 2\alpha)} \ds \sum_{i=1}^{p-1} \E_{\sigma^2} \left[\dfrac{(\beta_{0,i+1} - \beta_{0,i} )^2  }{\sigma^2} \mid \beta_0, \tau^2_0, w^2_0, y \right]\\
& \leq y^Ty + \dfrac{p}{4} \left(n + p + 2\alpha + 2 \right) + p  \dfrac{(y - X\beta_0)^T(y - X\beta_0) + \beta_0^T \Sigma^{-1}_{\tau_0, w_0} \beta_0 + 2 \xi}{n + p + 2\alpha - 2}  \\
& \quad + \dfrac{\lambda_1^2}{8 (n + p + 2\alpha)} \ds \sum_{i=1}^{p} \dfrac{ (n + p + 2\alpha)\beta^2_{0,i} }{(y - X\beta_0)^T(y - X\beta_0) + \beta_0^T \Sigma^{-1}_{\tau_0, w_0} \beta_0 + 2\xi} \\
& \quad + \dfrac{\lambda_2^2}{8 (n + p + 2\alpha)} \ds \sum_{i=1}^{p-1} \dfrac{ (n + p + 2\alpha)(\beta_{0,i+1} - \beta_{0,i} )^2 }{(y - X\beta_0)^T(y - X\beta_0) + \beta_0^T \Sigma^{-1}_{\tau_0, w_0} \beta_0 + 2\xi} \\
& \leq y^Ty + p \dfrac{(y - X\beta_0)^T(y - X\beta_0) + \beta_0^T \Sigma^{-1}_{\tau_0, w_0} \beta_0 + 2 \xi}{n + p + 2\alpha - 2} \\
& \quad + \dfrac{p}{4}(n + p + 2\alpha + 2) + \dfrac{\lambda^2_1}{8} \dfrac{\sum_{i=1}^{p} \beta^2_{0,i} }{\beta_0^T \Sigma^{-1}_{\tau_0, w_0} \beta_0} + \dfrac{\lambda_2^2}{8 }   \dfrac{ \sum_{i=1}^{p-1}(\beta_{0,i+1} - \beta_{0,i} )^2  }{\beta_0^T \Sigma^{-1}_{\tau_0, w_0} \beta_0}\,. \numberthis \label{eq:bfl_till_sigma_1}
\end{align*}
Using \eqref{eq:fused_beta_expansion},
\begin{equation}
	\label{eq:bfl_betas_ineq}
\beta_0^T \Sigma^{-1}_{\tau_0, w_0} \beta_0 \geq \ds \sum_{i=1}^{p} \dfrac{\beta_{0,i}^2}{\tau^2_{0,i}} \quad \text{ and } \quad \beta_0^T \Sigma^{-1}_{\tau_0, w_0} \beta_0 \geq \ds \sum_{i=1}^{p-1} \dfrac{(\beta_{0,i+1} - \beta_{0,i} )^2}{w^2_{0,i}}\,.
\end{equation}
Using \eqref{eq:bfl_betas_ineq} in \eqref{eq:bfl_till_sigma_1},
\begin{align*}
& \E_{\sigma^2} \left[  \E_{\tau^2, w^2} \left[  \E_{\beta} \left[V_{BFL}(\beta, \tau^2, w^2,  \sigma^2)  \, \mid \, \tau^2, w^2, \sigma^2, y \right]  \, \mid \, \beta_0, \sigma^2, y \right] \, \mid \, \beta_0, \tau^2_0, w^2_0, y  \right]\\
& \leq y^Ty + p \dfrac{(y - X\beta_{0})^T(y - X\beta_{0}) + \beta_{0}^T \Sigma^{-1}_{\tau_{0}, w_0} \beta_{0} + 2 \xi}{n + p + 2\alpha - 2}\\
& \quad  + \dfrac{p}{4}(n + p + 2\alpha + 2) + \dfrac{\lambda^2_1}{8} \dfrac{\sum_{i=1}^{p} \beta^2_{0,i} }{ \sum_{i=1}^{p}\beta^2_{0,i}/\tau^2_{0,i} } + \dfrac{\lambda_2^2}{8 }   \dfrac{ \sum_{i=1}^{p-1}(\beta_{0,i+1} - \beta_{0,i})^2  }{ \sum_{i=1}^{p-1} (\beta_{0,i+1} - \beta_{0,i})^2/w_{0,i}^2}\,.
\end{align*}
By Lemma \ref{lem:tau_fraction_bound},
\begin{align*}
& \E_{\sigma^2} \left[  \E_{\tau^2, w^2} \left[  \E_{\beta} \left[V_{BFL}(\beta, \tau^2, w^2,  \sigma^2)  \, \mid \, \tau^2, w^2, \sigma^2, y \right]  \, \mid \, \beta_0, \sigma^2, y \right] \, \mid \, \beta_0, \tau^2_0, w^2_0, y  \right]\\
& \leq y^Ty + \dfrac{p}{4}(n + p + 2\alpha + 2) + \dfrac{2p\xi}{n + p + 2\alpha - 2} \\
& \quad + \dfrac{p}{n+p+2\alpha - 2} \left((y - X\beta_{0})^T(y - X\beta_{0}) + \beta_{0}^T \Sigma^{-1}_{\tau_{0}, w} \beta_{0} \right) + \dfrac{\lambda^2_1}{8} \ds \sum_{i=1}^{p} \tau^2_{0,i} + \dfrac{\lambda_2^2}{8}\ds \sum_{i=1}^{p-1} w_{0,i}^2\\
& \leq \phi_{BFL} V(\beta_0, \tau^2_0, w^2_0, \sigma^2_0) + L_{BFL}\,,
\end{align*}
where
\begin{equation}
\label{eq:phi_bsgl}
\phi_{BFL} = \max \left\{\dfrac{p}{n + p + 2\alpha - 2}, \dfrac{1}{2}  \right\} < 1 \text{ for } n \geq 3\quad \text{ and }
\end{equation}
\begin{equation}
\label{eq:L_bsgl}
L_{BFL} = y^Ty + \dfrac{p}{2} \left(n + p + 2\alpha + 2 \right) + \dfrac{2 p \xi }{n + p + 2 \alpha  - 2}\,.
\end{equation}

\subsection{Minorization} 
\label{sub:minorization}

To establish a one-step minorization, we need to show that for all sets $C_d$ defined as 
\[C_d =  \{ (\beta, \tau^2, w^2,  \sigma^2) : V_{BFL}(\beta, \tau^2, w^2, \sigma^2) \leq d\}\, , \]
there exists an $\epsilon > 0$ and a density $q$ such that for all $(\beta_0, \tau^2_0, w^2_0, \sigma^2_0) \in C_d$
\[k_{BFL}(\beta, \tau^2, w^2, \sigma^2 \mid \beta_0, \tau^2_0, w^2_0, \sigma^2_0) \geq \epsilon\, q(\beta, \tau^2, w^2, \sigma^2)\,. \]
To establish this condition, recall that,
\begin{align*}
k_{BFL}(\beta, \tau^2, w^2, \sigma^2 \mid \beta_0, \tau^2_0, w^2_0, \sigma^2_0) & = f(\beta \mid \tau^2, w^2, \sigma^2, y) \;f(\tau^2, w^2 \mid \beta_0, \sigma^2, y) f(\sigma^2 \mid \beta_0, \tau^2_0, w_0^2, y)\,.
\end{align*}
For our drift function, for all $(\beta_0, \tau^2_0, w_0^2, \sigma_0^2) \in C_d$ the following relation holds due to \eqref{eq:fused_beta_expansion}:
\begin{align*}
(y - X\beta_0)^T(y - X\beta_0) + \beta_0^T \Sigma_{\tau_0, w_0}^{-1} \beta_0 + \dfrac{\lambda_1^2}{4}\ds \sum_{i=1}^{p} \tau^2_{0,i}  + \dfrac{\lambda_2^2}{4} \ds \sum_{i=1}^{p-1} w^2_{0,i} & \leq d\\
(y - X\beta_0)^T(y - X\beta_0) + \ds \sum_{i=1}^{p} \dfrac{\beta_{0,i}^2}{\tau^2_{0,i} } + \ds \sum_{i=1}^{p-1} \dfrac{(\beta_{0,i+1} - \beta_{0,i})^2}{w_{0,i}^2}+ \dfrac{\lambda_1^2}{4}\ds \sum_{i=1}^{p} \tau^2_{0,i}  + \dfrac{\lambda_2^2}{4} \ds \sum_{i=1}^{p-1} w^2_{0,i} & \leq d\,.
\end{align*}
Using the above and Lemma \ref{lem:tau_fraction_bound}, for each $\beta_{0,j}$,
\begin{align*}
	 & \beta_{0,j}^2 \leq \sum_{i=1}^{p} \beta_{0,i}^2 \leq \left( \ds \sum_{i=1}^{p} \tau^2_{0,i} \right) \left(\ds \sum_{i=1}^{p} \dfrac{\beta_{0,i}^2}{\tau^2_{0,i}}  \right) \leq \dfrac{4d^2}{\lambda_1^2} := d_1^2 \numberthis \label{eq:tau_beta_bound_bfl}\,,
\end{align*}
and similarly for each $i = 1, \dots, p-1$
\begin{equation}
\label{eq:w_betadiff_bound_bfl}
(\beta_{0,j+1} - \beta_{0,j})^2 \leq \ds \sum_{i=1}^{p-1}(\beta_{0,i+1} - \beta_{0,i})^2 \leq \left(\ds \sum_{i=1}^{p-1} w_{0,i}^2 \right) \left(\ds \sum_{i=1}^{p-1} \dfrac{(\beta_{0,i} - \beta_{0,i} )^2}{w_{0,i}^2}  \right)\leq  \dfrac{4d^2}{\lambda_2^2} := d_2^2\,.
\end{equation}
With these bounds involving $\beta_0$ and using Lemma \ref{lem:inv_gauss_minor},
\begin{align*}
f(\tau^2, w^2 \mid \beta_0, \sigma^2, y)& =  f(\tau^2 \mid \beta_0, \sigma^2, y) \, f(w^2 \mid \beta_0, \sigma^2, y)\\
& \geq \prod_{i=1}^{p}\exp\left\{ - \sqrt{\dfrac{\lambda_1^2 d_1^2}{\sigma^2}} \right\} q_i(\tau^2_i \mid \sigma^2)\;  \prod_{i=1}^{p-1}\exp\left\{ - \sqrt{\dfrac{\lambda_2^2 d_2^2}{\sigma^2}} \right\} h_i(w^2_i \mid \sigma^2)\\
& = \exp\left\{-p \sqrt{\dfrac{\lambda_1^2 d_1^2}{\sigma^2} } -p \sqrt{\dfrac{\lambda_2^2 d_2^2}{\sigma^2} }   \right\} \left[ \prod_{i=1}^{p} q_i (\tau^2_i \mid \sigma^2)\right] \left[\prod_{i=1}^{p-1} h_{i} (w^2_i \mid \sigma^2) \right]\,.
\intertext{Since for $a,b \geq 0, 2ab \leq a^2 + b^2$,}
 f(\tau^2, w^2 \mid \beta_0, \sigma^2, y) & \geq \exp \left\{- 1 - \dfrac{p^2\lambda_2^2 d_2^2}{2 \sigma^2} - \dfrac{p^2\lambda_1^2 d_1^2}{2 \sigma^2}   \right\} \left[ \prod_{i=1}^{p} q_i (\tau^2_i \mid \sigma^2)\right] \left[\prod_{i=1}^{p-1} h_{i} (w^2_i \mid \sigma^2) \right] \numberthis \label{eq:bfl_tau_gamma_minor} \,,
\end{align*}
where $q_i$ and $h_i$ are densities of the reciprocal of an Inverse-Gaussian distribution  with parameters $\sqrt{\lambda_1^2 \sigma^2/{d_1^2} }$ and $\lambda_1^2$, and $\sqrt{\lambda_2^2 \sigma^2/d_2^2}$ and $\lambda_2^2$, respectively. 

Recall the decomposition $\Sigma^{-1}_{\tau_0, w_0} = L_{0,1} + L_{0,2}$ in \eqref{eq:sigma_split}; here the 0 in the index indicates $\tau^2_{0}$ and $w^2_{0}$ entries. Here $L_{0,1}$ is the diagonal matrix with entries $1/(2\tau^2_{0,i})$.  Then since 
\begin{align*}
& y^TX(X^TX + L_{0,1} + L_{0,2})X^Ty \geq y^TX (X^TX + L_{0,1})X^Ty\\
\Rightarrow \;& y^TX(X^TX + L_{0,1} + L_{0,2})^{-1}X^Ty \leq y^TX (X^TX + L_{0,1})^{-1}X^Ty\,.
\end{align*}
Using the above, the fact that for each $i = 1, \dots, p$, $2\tau^2_{0,i} \leq 8d/\lambda_1^2 $, and Lemma \ref{lem:yty_bound_minor},
\begin{align*}
(y - X\beta_0)^T(y - X\beta_0) + \beta^T_0 \Sigma^{-1}_{\tau_0, w_0}\beta_0 & \geq y^Ty -  y^TX \left(X^TX + \Sigma^{-1}_{\tau, w} \right)^{-1}X^Ty \\
& \geq y^Ty - y^TX (X^TX + L_{0,1})^{-1}X^Ty\\
& \geq y^Ty - y^TX \left(X^TX + \dfrac{\lambda_1^2}{8d}I_p\right)^{-1}X^Ty \numberthis \label{eq:bfl_sigma_yty_bound}\,.
\end{align*}
Using  \eqref{eq:bfl_sigma_yty_bound} and the fact that for $(\beta_0, \tau^2_0, w^2_0, \sigma^2_0) \in C_d$, $(y - X\beta_0)^T(y - X\beta_0) + \beta_0^T \Sigma^{-1}_{\tau_0, w_0} \beta_0 \leq d$,
\begin{align*}
& \exp \left\{-\dfrac{1}{2} - \dfrac{p^2\lambda_2^2 d_2^2}{2 \sigma^2} -\dfrac{1}{2} - \dfrac{p^2\lambda_1^2 d_1^2}{2 \sigma^2}   \right\} f(\sigma^2 \mid \beta_0, \tau^2_0, w^2_0, y) \\
& =  \exp \left\{-1 - \dfrac{p^2\lambda_2^2 d_2^2}{2 \sigma^2} - \dfrac{p^2\lambda_1^2 d_1^2}{2 \sigma^2}   \right\}  \dfrac{\left(\frac{(y-X\beta_0)^T(y-X\beta_0) + \beta^T_0 \Sigma_{\tau_0,w_0}^{-1} \beta_0+ 2\xi }{2} \right)^{\frac{n+p}{2} + \alpha}}{\Gamma\left( \frac{n+p}{2} + \alpha  \right)}\left(\sigma^2 \right)^{-\frac{n+p}{2} - \alpha - 1}\\
& \quad \times  \exp \left\{ -\dfrac{(y-X\beta_0)^T(y-X\beta_0) + \beta^T_0 \Sigma_{\tau_0, w_0}^{-1} \beta_0 + 2\xi}{2 \sigma^2} \right\}\\
& \geq e^{-1} {\left(\frac{y^Ty -  y^TX(X^TX + \lambda_1^2(8d)^{-1}I_p)^{-1}X^Ty+ 2\xi }{2} \right)^{\frac{n+p}{2} + \alpha}} \dfrac{1}{\Gamma\left( \frac{n+p}{2} + \alpha  \right)}\left(\sigma^2 \right)^{-\frac{n+p}{2} - \alpha - 1}\\
& \quad \times \exp \left\{ -\dfrac{d+ 2\xi + p^2\lambda_2^2 d_2^2 + p^2\lambda^2_1d_1^2}{2 \sigma^2} \right\}\\
& = e^{-1}\left(\frac{y^Ty -  y^TX(X^TX + \lambda_1^2(8d)^{-1}I_p)^{-1}X^Ty+ 2\xi }{d + 2\xi + p^2\lambda_2^2 d_2^2 + p^2\lambda^2_1d_1^2} \right)^{\frac{n+p}{2} + \alpha} q(\sigma^2)\,, \numberthis \label{eq:bfl_minor_sig}
\end{align*}
where $q(\sigma^2)$ is the Inverse-Gamma density with parameters, $(n+p)/2 + \alpha$ and $d + 2\xi + p^2\lambda_2^2 d_2^2 + p^2\lambda^2_1d_1^2$. Finally, using \eqref{eq:bfl_tau_gamma_minor} and \eqref{eq:bfl_minor_sig},
\begin{equation*}
	k_{BFL}(\beta, \tau^2, w^2, \sigma^2 \mid \beta_0, \tau^2_0, w_0^2, \sigma^2_0) \geq \epsilon \; f(\beta \mid \tau^2, w^2, \sigma^2, y) \; q(\sigma^2) \left[\prod_{i=1}^{p} q_i(\tau^2_i \mid \sigma^2)\right] \left[\prod_{i=1}^{p-1} h_i (w_{i}^2 \mid \sigma^2) \right]\,, 
\end{equation*}
where
\begin{equation*}
\label{eq:epsilon}
\epsilon = e^{-1}\left(\frac{y^Ty -  y^TX(X^TX + \lambda_1(8d)^{-1}I_p)^{-1}X^Ty+ 2\xi }{d + 2\xi + p^2\lambda_2^2 d_2^2 + p^2\lambda^2_1d_1^2} \right)^{\frac{n+p}{2} + \alpha}\,.
\end{equation*}

\subsection{Starting Values} 
\label{sub:bfl_starting_values}
Starting value $(\beta_0, \tau^2_0, w^2_0, \sigma^2_0)$ can be chosen so that 
$(\beta_0, \tau^2_0, w^2_0, \sigma^2_0) = \arg \min V_{BFL}(\beta, \tau^2, w^2, \sigma^2).$
We will find the minimum by profiling out $\tau^2$ and $w^2$.
By \eqref{eq:fused_beta_expansion} in Appendix \ref{sub:validity_of_the_prior},
\begin{align*}
	\dfrac{\partial V_{BFL}}{\partial \tau^2_{0,i}} = 0 \Rightarrow & = -\dfrac{\beta^2_{0,i}}{\tau^4_{0,i}} + \dfrac{\lambda_1^2}{4} = 0 \Rightarrow \tau^2_{0,i}  = \sqrt{\dfrac{4 \beta^2_{0,i}}{\lambda_1^2} }\\
	\dfrac{\partial V_{BFL}}{\partial w^2_{0,i}} = 0 \Rightarrow & = -\dfrac{(\beta_{0,i+1} - \beta_{0,i})^2}{w^4_{0,i}} + \dfrac{\lambda_2^2}{4} = 0 \Rightarrow w^2_{0,i}  = \sqrt{\dfrac{4 (\beta_{0,i+1} - \beta_{0,i})^2}{\lambda_2^2} }\,.
\end{align*}
The $\beta_0$ that minimizes $V_{BFL}$ is,
\begin{align*}
\beta_0 & = \arg \min_{\beta \in \real^p} \Bigg\{(y - X\beta)^T(y - X\beta) + \ds \sum_{i=1}^{p} \dfrac{\lambda_1  \beta^2_i}{2 \sqrt{\beta^2_i  }}  + \ds \sum_{i=1}^{p-1} \ds \dfrac{\lambda_2 (\beta_{i+1} - \beta_{i})^2}{2 \sqrt{(\beta_{i+1} - \beta_{i})^2} }\\
& \quad + \ds \sum_{i=1}^{p} \dfrac{\lambda_1^2}{4} \sqrt{\dfrac{4 \beta^2_i }{\lambda_1^2} }  +  \dfrac{\lambda_2^2}{4}  \ds \sum_{i=1}^{p-1} \sqrt{\dfrac{4 (\beta_{i+1} - \beta_{i})^2 }{\lambda_2^2} } \Bigg \}\\ 
& = \arg \min_{\beta \in \real^p} \; \left \{ (y - X\beta)^T(y - X\beta) + \lambda_1 \ds \sum_{i=1}^{p} | \beta_i| + \lambda_2 \ds \sum_{i=1}^{p-1} |\beta_{i+1} - \beta_{i} | \right\}\,,
\end{align*}
which equivalent to the fused lasso solution. Thus, a reasonable starting value is $\beta_0$ being the fused lasso estimate, $\tau^2_{0,i} = 2|\beta_{0,i}|/\lambda_1$ and $w^2_{0,i} = 2|\beta_{0, i+1} - \beta_{0,i}|/\lambda_2$.



\section{Proof of Geometric Ergodicity in the Bayesian Group Lasso}
\label{sec:bgl_proof}

\subsection{Drift Condition}
Consider the drift function
\begin{equation}
\label{eq:drift_function}
	V_{BGL}(\beta, \tau^2, \sigma^2) = (y - X\beta)^T(y-X\beta) + \beta^T D_{\tau}^{-1} \beta + \dfrac{\lambda^2}{4}\ds \sum_{k=1}^{K} \tau^2_k\,.
\end{equation}
For the drift condition we need to show that there exists a $0 < \phi_{BGL} < 1$ and $L_{BGL} > 0$ such that,
\[\E_{(k)}\left[ V_{BGL}(\beta, \tau^2, \sigma^2) \, | \, \beta_0, \tau^2_0, \sigma^2_0   \right] \leq \phi_{BGL} V_{BGL}(\beta_0, \tau^2_0, \sigma^2_0) + L_{BGL}\,, \]
for every $(\beta_0, \tau^2_0, \sigma^2_0) \in \real^p \times \real_+^K \times \real_+$. Just as in the proof for BFL,
\begin{align*}
	\E_{(k)}\left[ V_{BGL}(\beta, \tau^2, \sigma^2) \mid \beta_0, \tau^2_0, \sigma^2_0   \right] =  \E_{\sigma^2} \left[  \E_{\tau^2} \left[  \E_{\beta} \left[V_{BGL}(\beta, \tau^2, \sigma^2)  \mid \tau^2, \sigma^2, y \right]  \mid \beta_0, \sigma^2, y \right] \mid \beta_0, \tau^2_0, y  \right].
\end{align*}
We will evaluate the expectations sequentially, starting with the innermost expectation. By Lemma \ref{lem:beta_trace_bound} and following the steps as before \eqref{eq:bfl_till_tau_1}, 
\begin{align*}
\E_{\tau^2} \left[  \E_{\beta} \left[V_{BGL}(\beta, \tau^2, \sigma^2)  \mid \tau^2, \sigma^2, y \right] \mid \beta_0, \sigma^2, y \right] & \leq  y^Ty + p \sigma^2 + \dfrac{\lambda^2}{4} \sum_{k=1}^{K} \left[ \sqrt{ \dfrac{\beta^T_{0,G_k} \beta_{0,G_k} }{\lambda^2 \sigma^2} } + \dfrac{1}{\lambda^2}  \right]\,.
\end{align*}
Let $M = \max \{ m_1,  \dots, m_K\}$. Then,
\begin{align*}
& \E_{\tau^2} \left[  \E_{\beta} \left[V_{BGL}(\beta, \tau^2, \sigma^2)  \, \mid \, \tau^2, \sigma^2, y \right]  \, \mid \, \beta_0, \sigma^2, y \right] \\
& \leq y^Ty + p \sigma^2 + \dfrac{\lambda^2}{4} \sum_{k=1}^{K} \left[ \dfrac{\beta^T_{0,G_k} \beta_{0,G_k} }{2 \sigma^2 M(n + p +2\alpha)}  + \dfrac{M(n+p+2\alpha)}{2 \lambda^2} + \dfrac{1}{\lambda^2} \right]\\
& \leq y^Ty + p \sigma^2 + \dfrac{p}{4} \left(1 + \dfrac{M(n + p + 2\alpha)}{2}  \right) + \dfrac{\lambda^2 \sum_{k=1}^{K} \beta^T_{0,G_k} \beta_{0,G_k} }{8 \sigma^2 M(n + p + 2\alpha)}\,.
\end{align*}
For the last expectation, using steps as before \eqref{eq:bfl_till_sigma_1}, we get
\begin{align*}
& \E_{\sigma^2} \left[  \E_{\tau^2} \left[  \E_{\beta} \left[V_{BGL}(\beta, \tau^2, \sigma^2)  \, \mid \, \tau^2, \sigma^2, y \right]  \, \mid \, \beta_0, \sigma^2, y \right] \, \mid \, \beta_0, \tau^2_0, y  \right]\\
& \leq y^Ty + \dfrac{p}{4} \left(1 + \dfrac{M(n + p + 2\alpha)}{2}  \right) +  \; \dfrac{\lambda^2}{8M} \left(\dfrac{\sum_{k=1}^{K} \beta^T_{0,G_k} \beta_{0,G_k}}{\beta_0^T D_{\tau_0}^{-1} \beta_0 } \right) + p \dfrac{\|y - X\beta_0\|^2 + \beta_0^T D_{\tau_0}^{-1}\beta_0 + 2\xi }{n + p + 2\alpha - 2} \,.
\end{align*}
Recall that,
\[D_{\tau_0}  = \text{diag}( \;\underbrace{\tau^2_{0,1}, \dots, \tau^2_{0,1}}_{m_1}, \underbrace{\tau^2_{0,2}, \dots, \tau^2_{0,2}}_{m_2}, \dots, \underbrace{\tau^2_{0,K}, \dots, \tau^2_{0,K}}_{m_K} )\,.\]
Let the diagonals of $D_{\tau_0}$ be $\tau^2_{0,*i}$ for $i = 1, \dots, p$. Then $\beta_0^T D_{\tau_0}^{-1} \beta_0 = \sum_{i=1}^{p} \beta^2_{0,i}/\tau^2_{0,*i}$ and $\sum_{i=1}^{p} \tau^2_{0,*i}$ $\leq M \sum_{k=1}^{K} \tau^2_{0,k}$. Using this and Lemma \ref{lem:tau_fraction_bound},
\begin{align*}
& \E_{\sigma^2} \left[  \E_{\tau^2} \left[  \E_{\beta} \left[V_{BGL}(\beta, \tau^2, \sigma^2)  \, \mid \, \tau^2, \sigma^2, y \right]  \, \mid \, \beta_0, \sigma^2, y \right] \, \mid \, \beta_0, \tau^2_0, y  \right]\\
& \leq y^Ty + \dfrac{p}{4} \left(1 + \dfrac{M(n + p + 2\alpha)}{2}  \right) +  \; \dfrac{\lambda^2}{8}\ds \sum_{k=1}^{K} \tau^2_{0,k} + p \dfrac{(y - X\beta_0)^T(y - X\beta_0) + \beta_0^T D_{\tau_0}^{-1}\beta_0 + 2\xi}{n + p + 2\alpha - 2}\\
& \leq \phi_{BGL} V_{BGL}(\beta_0, \tau^2_0, \sigma^2_0) + L_{BGL}\,,
\end{align*}
where
\begin{equation}
\label{eq:gamma_value}
\phi_{BGL} = \max \left\{ \dfrac{p}{n + p + 2\alpha - 2}, \dfrac{1}{2} \right\} < 1 \text{ for } n \geq 3\, \quad \text{ and }
\end{equation}
\begin{equation}
\label{eq:L_value}
L_{BGL} = y^Ty + \dfrac{p}{4} \left(1 + \dfrac{M(n + p + 2\alpha)}{2}  \right) + \dfrac{2p \xi}{n + p + 2\alpha - 2}\,.
\end{equation}

\subsection{Minorization Condition}

For $d > 0$, define $C_d = \{ (\beta, \tau^2, \sigma^2) : V_{BGL}(\beta, \tau^2, \sigma^2) \leq d\}$.
To establish the minorization condition, we recall that,
\begin{align*}
k_{BGL}(\beta, \tau^2, \sigma^2 \mid \beta_0, \tau^2_0, \sigma^2_0) = f(\beta \mid \tau^2, \sigma^2, y) \;f(\tau^2 \mid \beta_0, \sigma^2, y) \; f(\sigma^2 \mid \beta_0, \tau^2_0, y)\,. \numberthis \label{bgl:mtd_again}
\end{align*}
By our choice of drift function, for all $(\beta_0, \tau^2_0, \sigma_0^2) \in C_d$ the following relation holds,
\begin{align*}
(y - X\beta_0)^T(y - X\beta_0) + \beta_0^T D_{\tau_0}^{-1} \beta_0 + \dfrac{\lambda^2}{4}\ds \sum_{k=1}^{K} \tau^2_{0,k} \leq d\,. \numberthis \label{eq:bgl_drift_bound_1}
\end{align*}
By \eqref{eq:bgl_drift_bound_1}, each of $\beta_0^T D_{\tau_0}^{-1} \beta_0$ and $(\lambda^2/4)\ds \sum_{k=1}^{K} \tau^2_{0,k}$ is less than or equal to $d$, so $ \beta^T_{0, G_k} \beta_{0, G_k} \leq 4d^2/\lambda^2 := d_1^2$ for all $k = 1, \dots, K$.
By Lemma \ref{lem:inv_gauss_minor},
\begin{align*}
f(\tau^2 \mid \beta_0, \sigma^2, y) & \geq \exp\left\{- \dfrac{1}{2} - \dfrac{K^2 \lambda^2 d_1^2}{2\sigma^2} \right\} \prod_{k=1}^{K} q_k(\tau^2_k \mid \sigma^2)\,, \numberthis \label{eq:minor_tau}
\end{align*}
where $q_k$ is the density of the reciprocal of an Inverse-Gaussian distribution  with parameters $\sqrt{\lambda^2 \sigma^2/{d_1^2} }$ and $\lambda^2$. Now, since for each $i = 1, \dots, p$, $\tau^2_{0,i} \leq 4d/\lambda^2$, by Lemma \ref{lem:yty_bound_minor}
\begin{align*}
(y - X\beta_0)^T(y - X\beta_0) + \beta^T_0 D_{\tau_0}^{-1}\beta_0 
& \geq y^Ty -  y^TX \left(X^TX + \dfrac{\lambda^2}{4d}I_p \right)^{-1}X^Ty \,. \numberthis \label{eq:bgl_minor_till_tau}
\end{align*}

Using \eqref{eq:bgl_minor_till_tau} and following steps as before \eqref{eq:bfl_minor_sig}, we arrive at the following,
\begin{align*}
& \exp\left\{- \dfrac{1}{2} - \dfrac{K^2 \lambda^2 d_1^2}{2\sigma^2} \right\}  f(\sigma^2 \mid \beta_0, \tau^2_0, y) \\
& \geq e^{-\frac{1}{2}}\left(\frac{y^Ty -  y^TX(X^TX + \lambda^2(4d)^{-1}I_p)^{-1}X^Ty+ 2\xi }{d + 2\xi + K^2\lambda^2 d_1^2} \right)^{\frac{n+p}{2} + \alpha} q(\sigma^2)\,, \numberthis \label{eq:minor_sig}
\end{align*}
where $q(\sigma^2)$ is the Inverse-Gamma density with parameters, $(n+p)/2 + \alpha$ and $d + 2\xi + K^2 \lambda^2 d_1^2$.

Finally, using \eqref{eq:minor_tau} and \eqref{eq:minor_sig} in \eqref{bgl:mtd_again}
\begin{equation}
\label{eq:minor}
	k_{BGL}(\beta, \tau^2, \sigma^2 \mid \beta_0, \tau^2_0, \sigma^2_0)  \geq \; \epsilon \; f(\beta \mid \tau^2, \sigma^2, y) \; q(\sigma^2) \prod_{k=1}^{K} q_k(\tau^2 \mid \sigma^2)\,,
\end{equation}

where
\begin{equation}
\label{eq:epsilon}
	\epsilon = e^{-\frac{1}{2}} \left(\frac{y^Ty -  y^TX(X^TX + \lambda^2(4d)^{-1}I_p)^{-1}X^Ty+ 2\xi }{d + 2\xi + 4K^2d^2} \right)^{\frac{n+p}{2} + \alpha}\,.
\end{equation}

\subsection{Starting Values} 
\label{sub:bgl_starting_values}
As before, we first differentiate with respect to $\tau^2$ and then with respect to $\beta$.  Note that
\begin{align*}
	\dfrac{\partial V_{BGL}}{\partial \tau^2_{0,k}} = 0 \Rightarrow & = -\dfrac{\beta^T_{0,G_k} \beta_{0,G_k}}{\tau^4_{0,k}} + \dfrac{\lambda^2}{4} = 0 \Rightarrow \tau^2_{0,k}  = \sqrt{\dfrac{4 \beta^T_{0,G_k}\beta_{0,G_k}}{\lambda^2} }\,.
\end{align*}
Thus, the $\beta_0$ that minimizes $V_{BGL}$ is then,
\begin{align*}
\beta_0 & = \arg \min_{\beta \in \real^p}\;\; (y - X\beta)^T(y - X\beta) + \ds \sum_{k=1}^{K} \dfrac{\lambda \; \beta^T_{G_k}\beta_{G_k}}{2 \sqrt{\beta^T_{G_k}\beta_{G_k}  }} + \dfrac{\lambda^2}{4} \ds \sum_{k=1}^{K} \sqrt{ \dfrac{4 \beta^T_{G_k}\beta_{G_k}}{\lambda^2 }} \\
& = \arg \min_{\beta \in \real^p} \; \; (y - X\beta)^T(y - X\beta) + \lambda \ds \sum_{k=1}^{K} \sqrt{ \beta^T_{G_k} \beta_{G_k} }\,,
\end{align*}
which equivalent to the group lasso solution. Thus a reasonable starting value for the Markov chain is $\beta_0$ being the group lasso estimate and $\tau^2_{0,k} = 2\sqrt{\beta^T_{0,G_k} \beta_{0, G_k}}/\lambda$.

\section{Proof of Geometric Ergodicity in the Bayesian Sparse Group Lasso} 
\label{sec:bsgl_proof}

\subsection{Drift Condition} 
\label{sub:bsgl_drift_condition}

Consider the drift function 
\begin{equation}
\label{eq:drift_function_2}
	V_{BSGL}(\beta, \tau^2, \gamma^2, \sigma^2) = (y - X\beta)^T(y-X\beta) + \beta^T V_{\tau, \gamma}^{-1} \beta + \dfrac{\lambda_1^2}{4}\ds \sum_{k=1}^{K} \tau^2_k + \dfrac{\lambda_2^2}{4}\ds \sum_{k=1}^{K} \sum_{j=1}^{m_k} \gamma^2_{k,j}\,.
\end{equation}
%
By Lemma \ref{lem:beta_trace_bound} and following the steps as before \eqref{eq:bfl_till_tau_1}
\begin{align*}
& \E_{\tau^2, \gamma^2} \left[  \E_{\beta} \left[V_{BSGL}(\beta, \tau^2, \gamma^2, \sigma^2)  \, \mid \, \tau^2, \gamma^2, \sigma^2, y \right]  \, \mid \, \beta_0, \sigma^2, y \right] \\
& \leq y^Ty + p \sigma^2 + \dfrac{\lambda_1^2}{4} \sum_{k=1}^{K} \left[ \sqrt{ \dfrac{\beta^T_{0,G_k} \beta_{0,G_k} }{\lambda_1^2 \sigma^2} } + \dfrac{1}{\lambda_1^2}  \right] + \dfrac{\lambda^2_2}{4} \ds \sum_{k=1}^{K}\sum_{j=1}^{m_k} \left[ \sqrt{\dfrac{\beta_{0,k,j}^2}{\lambda_2^2 \sigma^2} } + \dfrac{1}{\lambda^2_2} \right]\,.
\end{align*}
Define $M = \max \{ m_1,  \dots, m_K\}$. In addition, define 
\[A = \left(1 + \dfrac{\lambda_1^2}{\lambda_2^2} + \dfrac{\lambda_2^2}{\lambda_1^2}\right)(n+p+ 2 \alpha)\,.\]
Then,
\begin{align*}
& \E_{\tau^2, \gamma^2} \left[  \E_{\beta} \left[V_{BSGL}(\beta, \tau^2, \sigma^2)  \, \mid \, \tau^2, \sigma^2, y \right]  \, \mid \, \beta_0, \sigma^2, y \right] \\
& \leq y^Ty + p \sigma^2 + \dfrac{\lambda_1^2}{4} \sum_{k=1}^{K} \left[ \dfrac{\beta^T_{0,G_k} \beta_{0,G_k} }{2 \sigma^2 AM}  + \dfrac{AM}{2 \lambda_1^2} + \dfrac{1}{\lambda_1^2} \right] + \dfrac{\lambda^2_2}{4} \ds \sum_{k=1}^{K} \sum_{j=1}^{m_k} \left[ \dfrac{\beta_{0,k,j}^2}{2 \sigma^2 AM} + \dfrac{AM}{2 \lambda^2_2} + \dfrac{1}{\lambda^2_2} \right]\\
& = y^Ty + p \sigma^2 + \dfrac{p}{4} \left(2 + AM  \right) +  \left[\dfrac{\lambda_1^2 + \lambda_2^2 }{8AM} \right] \dfrac{\beta^T_0\beta_0}{\sigma^2}\,.
\end{align*}
For the last expectation, using steps as before \eqref{eq:bfl_till_sigma_1}, we get
\begin{align*}
& \E_{\sigma^2} \left[  \E_{\tau^2, \gamma^2} \left[  \E_{\beta} \left[V_{BSGL}(\beta, \tau^2, \gamma^2,  \sigma^2)  \, \mid \, \tau^2, \gamma^2, \sigma^2, y \right]  \, \mid \, \beta_0, \sigma^2, y \right] \, \mid \, \beta_0, \tau^2_0, \gamma^2_0, y  \right]\\
& \leq y^Ty + \dfrac{p}{4} \left(2 + AM  \right) +  \; (\lambda_1^2 + \lambda_2^2) \left[8M \left(1 + \dfrac{\lambda_1^2}{\lambda_2^2} + \dfrac{\lambda_2^2}{\lambda_1^2}\right) \right]^{-1} \left(\dfrac{ \beta^T_{0} \beta_{0}}{\beta_0^T V_{\tau_0, \gamma_0}^{-1} \beta_0 } \right)\\
& \quad + p \dfrac{(y - X\beta_0)^T(y - X\beta_0) + \beta_0^T V_{\tau_0, \gamma_0}^{-1}\beta_0 + 2\xi }{n + p + 2\alpha - 2} \,. \numberthis \label{eq:bsgl_till_sigma_1}
\end{align*}
Let $v_{0,i}$ denote the diagonals of $V_{\tau_0, \gamma_0}$. Then by Lemma \ref{lem:tau_fraction_bound},
and the fact that the harmonic mean of positive numbers is less than their arithmetic mean,
\begin{align*}
\dfrac{\beta^T_{0} \beta_{0}}{\beta_0^T V_{\tau_0, \gamma_0}^{-1} \beta_0 } & \leq \ds \sum_{i=1}^{p} v_{0,i} = \ds \sum_{k=1}^{K} \sum_{j= 1}^{m_k} \left( \dfrac{1}{\tau^2_{0,k}} + \dfrac{1}{\gamma_{0,k,j}^2}\right)^{-1} = \dfrac{1}{2} \ds \sum_{k=1}^{K} \sum_{j= 1}^{m_k} 2\left( \dfrac{1}{\tau^2_{0,k}} + \dfrac{1}{\gamma_{0,k,j}^2}\right)^{-1}\\
& \leq \dfrac{1}{2} \ds \sum_{k=1}^{K} \sum_{j= 1}^{m_k} \dfrac{\tau^2_{0,k} + \gamma^2_{0,k,j} }{2} \leq \dfrac{M}{4} \sum_{k=1}^{K} \tau^2_{0,k} + \dfrac{1}{4} \sum_{k=1}^{K} \sum_{j=1}^{m_k} \gamma^2_{0,k,j}\,. \numberthis \label{eq:bsgl_diag_bound}
\end{align*}
Using \eqref{eq:bsgl_diag_bound} in \eqref{eq:bsgl_till_sigma_1},
\begin{align*}
& \E_{\sigma^2} \left[  \E_{\tau^2} \left[  \E_{\beta} \left[V_{BSGL}(\beta, \tau^2, \sigma^2)  \, \mid \, \tau^2, \sigma^2, y \right]  \, \mid \, \beta_0, \sigma^2, y \right] \, \mid \, \beta_0, \tau^2_0, y  \right]\\
& \leq y^Ty + \dfrac{p}{4} \left(2 + AM  \right) +  \; (\lambda_1^2 + \lambda_2^2)\left[8M \left(1 + \dfrac{\lambda_1^2}{\lambda_2^2} + \dfrac{\lambda_2^2}{\lambda_1^2}\right) \right]^{-1} \left(\dfrac{M}{4} \sum_{k=1}^{K} \tau^2_{0,k} + \dfrac{1}{4} \sum_{k=1}^{K} \sum_{j=1}^{m_k} \gamma^2_{0,k,j} \right)\\
& \quad + p \dfrac{(y - X\beta_0)^T(y - X\beta_0) + \beta_0^T V_{\tau_0, \gamma_0}^{-1}\beta_0 + 2\xi }{n + p + 2\alpha - 2}\\
& \leq y^Ty + \dfrac{p}{4} \left(2 + AM  \right) + \dfrac{2 p \xi }{n + p + 2 \alpha  - 2} + \dfrac{p}{n + p + 2\alpha - 2} \left[(y - X\beta_0)^T(y - X\beta_0) + \beta_0^T V_{\tau_0, \gamma_0}^{-1}\beta_0 \right]\\
& \quad + \left(1 + \dfrac{\lambda_2^2}{\lambda_1^2}\right) \left[8 \left(1 + \dfrac{\lambda_1^2}{\lambda_2^2} + \dfrac{\lambda_2^2}{\lambda_1^2}\right) \right]^{-1}  \left(\dfrac{\lambda_1^2}{4}\ds \sum_{k=1}^{K} \tau^2_{0,k} \right)\\
& \quad +  \left(1 + \dfrac{\lambda_1^2}{\lambda_2^2}\right) \left[8 M \left(1 + \dfrac{\lambda_1^2}{\lambda_2^2} + \dfrac{\lambda_2^2}{\lambda_1^2}\right) \right]^{-1}  \left(\dfrac{\lambda_2^2}{4}\ds \sum_{k=1}^{K} \sum_{j=1}^{m_k} \gamma^2_{0,k,j} \right)\\
& \leq \phi_{BSGL} \; V_{BSGL}(\beta_0, \tau^2_0, \gamma^2_0, \sigma^2_0) + L_{BSGL}\,,
\end{align*}
where
\begin{equation}
\label{eq:phi_bsgl}
\phi_{BSGL} = \max \left\{\dfrac{p}{n + p + 2\alpha - 2}, \dfrac{\left(1 + \dfrac{\lambda_2^2}{\lambda_1^2} \right)}{8 \left(1 + \dfrac{\lambda_1^2}{\lambda_2^2} + \dfrac{\lambda_2^2}{\lambda_1^2}  \right)}, \dfrac{\left(1 + \dfrac{\lambda_1^2}{\lambda_2^2} \right)}{8 M \left(1 + \dfrac{\lambda_1^2}{\lambda_2^2} + \dfrac{\lambda_2^2}{\lambda_1^2}  \right)}    \right\} < 1 \text{ for } n\geq 3\,,
\end{equation}
and
\begin{equation}
\label{eq:L_bsgl}
L_{BSGL} = y^Ty + \dfrac{p}{4} \left(2 + AM  \right) + \dfrac{2 p \xi }{n + p + 2 \alpha  - 2}\,.
\end{equation}

\subsection{Minorization} 
\label{sub:minorization}

For $d > 0$, define $C_d = \{ (\beta, \tau^2, \gamma^2, \sigma^2) : V(\beta, \tau^2, \gamma^2, \sigma^2) \leq d\}$. Recall that,
\begin{align*}
& k_{BSGL}(\beta, \tau^2, \gamma^2, \sigma^2 | \beta_0, \tau^2_0, \gamma^2_0, \sigma^2_0) = f(\beta | \tau^2, \gamma^2, \sigma^2, y) \;f(\tau^2, \gamma^2 | \beta_0, \sigma^2, y) \; f(\sigma^2 |\beta_0, \tau^2_0, \gamma_0^2, y)\,. \numberthis \label{eq:bsgl_mtd_again}
\end{align*}
By our definition of the drift function, for all $(\beta_0, \tau^2_0, \gamma_0^2, \sigma_0^2) \in C_d$ the following relation holds:
\begin{align*}
(y - X\beta_0)^T(y - X\beta_0) + \ds \sum_{k=1}^{K} \dfrac{ \beta_{0,G_k}^T \beta_{0, G_k}}{\tau^2_{0,k} } + \ds \sum_{k=1}^{K} \sum_{j=1}^{m_k} \dfrac{\beta^2_{0,k,j}}{\gamma^2_{0,k,j}}  + \dfrac{\lambda_1^2}{4}\ds \sum_{k=1}^{K} \tau^2_{0,k}  + \dfrac{\lambda_2^2}{4} \ds \sum_{k=1}^{K} \sum_{j=1}^{m_k} \gamma^2_{0,k,j} & \leq d\,.
\end{align*}
Using the above and following on the lines of \eqref{eq:bgl_drift_bound_1} we get for all $k = 1, \dots, K$ and $j = 1, \dots, m_k$
\begin{align*}
	 & \beta^T_{0, G_k} \beta_{0, G_k} \leq \dfrac{4d^2}{\lambda_1^2} := d_1^2 \quad \text{ and } \quad \beta_{0,k,j}^2 \leq \dfrac{4d^2}{\lambda_2^2} := d_2^2\,. \numberthis  \label{eq:beta_gamma_bound}
\end{align*}


Using Lemma \ref{lem:inv_gauss_minor} and \eqref{eq:beta_gamma_bound} and following steps as before \eqref{eq:bfl_tau_gamma_minor},
\begin{align*}
f(\tau^2, \gamma^2 \mid \beta_0, \sigma^2, y)
& \geq \exp \left\{-1 - \dfrac{p^2\lambda_2^2 d_2^2}{2 \sigma^2} - \dfrac{K^2\lambda_1^2 d_1^2}{2 \sigma^2}   \right\} \prod_{k=1}^{K} \left[q_k (\tau^2_k \mid \sigma^2) \prod_{j=1}^{m_k} q_{k,j} (\gamma^2_{k,j} \mid \sigma^2) \right]\,, \numberthis \label{eq:bsgl_tau_gamma_minor}
\end{align*}
where $q_k(\tau^2_k \mid \sigma^2)$ and  $q_{k,j}(\gamma^2_{k,j} \mid \sigma^2)$ are the densities of the reciprocal of an Inverse-Gaussian distribution  with parameters $\sqrt{\lambda_1^2 \sigma^2/{d_1^2} }$ and $\lambda_1^2$, and  $\sqrt{\lambda_2^2 \sigma^2/d_2^2}$ and $\lambda_2^2$, respectively.  Since each $\tau_{0,k}^2 \leq 4d/\lambda_1^2$ and each $\gamma_{0,k,j}^2 \leq 4d/\lambda_2^2$, so
\[\left( \dfrac{1}{\tau_{0,k}^2} + \dfrac{1}{\gamma_{0,k,j}^2} \right)^{-1} \leq \left( \dfrac{\lambda_1^2}{4d} + \dfrac{\lambda^2_2}{4d}  \right)^{-1} := d_3\,. \]

By Lemma \ref{lem:yty_bound_minor}
\begin{align*}
(y - X\beta_0)^T(y - X\beta_0) + \beta^T_0 V_{\tau_0, \gamma_0}^{-1}\beta_0 
& \geq y^Ty -  y^TX \left(X^TX + \dfrac{1}{d_3}I_p \right)^{-1}X^Ty \numberthis \label{eq:sigma_yty_bound}\,.
\end{align*}
Using \eqref{eq:sigma_yty_bound} and following steps as before \eqref{eq:bfl_minor_sig}
\begin{align*}
& \exp \left\{-1 - \dfrac{p^2\lambda^2_2 d_2^2}{2 \sigma^2} - \dfrac{K^2\lambda^2_1 d_1^2}{2 \sigma^2}   \right\} f(\sigma^2 \mid \beta_0, \tau^2_0, \gamma^2_0, y) \\
& = e^{-1}\left(\frac{y^Ty -  y^TX(X^TX + d_3^{-1}I_p)^{-1}X^Ty+ 2\xi }{d + 2\xi + p^2\lambda_2^2 d_2^2 + K^2\lambda^2_1d_1^2} \right)^{\frac{n+p}{2} + \alpha} q(\sigma^2)\,, \numberthis \label{eq:minor_sig_bsgl}
\end{align*}
where $q(\sigma^2)$ is the density of the Inverse-Gamma distribution with parameters, $(n+p)/2 + \alpha$ and $d + 2\xi + p^2\lambda_2^2 d_2^2 + K^2\lambda^2_1d_1^2$. Using \eqref{eq:bsgl_tau_gamma_minor} and \eqref{eq:minor_sig_bsgl} in \eqref{eq:bsgl_mtd_again},
\begin{align*}
\label{eq:minor}
	& k_{BSGL}(\beta, \tau^2, \gamma^2, \sigma^2 \mid \beta_0, \tau^2_0, \gamma_0^2, \sigma^2_0) \geq \; \epsilon \; f(\beta \mid \tau^2, \gamma^2, \sigma^2, y) \; q(\sigma^2) \prod_{k=1}^{K} \left[q_k(\tau^2_k \mid \sigma^2) \prod_{j=1}^{m_k} q_{k,j} (\gamma_{k,j}^2 \mid \sigma^2) \right]\,,
\end{align*}
where
\begin{equation}
\label{eq:epsilon}
\epsilon = e^{-1}\left(\frac{y^Ty -  y^TX(X^TX + d_3^{-1}I_p)^{-1}X^Ty+ 2\xi }{d + 2\xi + p^2\lambda_2^2 d_2^2 + K^2\lambda^2_1d_1^2} \right)^{\frac{n+p}{2} + \alpha}\,.
\end{equation}


\subsection{Starting Values} 
\label{sub:bsgl_starting_values}
To minimize $V_{BSGL}$,
\begin{align*}
	\dfrac{\partial V_{BSGL}}{\partial \tau^2_{0,k}} = 0 \Rightarrow & = -\dfrac{\beta^T_{0,G_k} \beta_{0,G_k}}{\tau^4_{0,k}} + \dfrac{\lambda_1^2}{4} = 0 \Rightarrow \tau^2_{0,k}  = \sqrt{\dfrac{4 \beta^T_{0,G_k}\beta_{0,G_k}}{\lambda_1^2} }\\
	\dfrac{\partial V_{BSGL}}{\partial \gamma^2_{0,k,j}} = 0 \Rightarrow & = -\dfrac{\beta^2_{0,k,j}}{\gamma^4_{0,k,j}} + \dfrac{\lambda_2^2}{4} = 0 \Rightarrow \gamma^2_{0,k,j}  = \sqrt{\dfrac{4 \beta^2_{0,k,j}}{\lambda_2^2} }\,.
\end{align*}
For the starting value for $\beta$,
\begin{align*}
\beta_0 & = \arg \min_{\beta \in \real^p} \Bigg\{ \;\; (y - X\beta)^T(y - X\beta) + \ds \sum_{k=1}^{K} \dfrac{\lambda_1  \beta^T_{G_k}\beta_{G_k}}{2 \sqrt{\beta^T_{G_k}\beta_{G_k}  }}  + \ds \sum_{k=1}^{K} \sum_{j=1}^{m_k} \ds \dfrac{\lambda_2 \beta_{k,j}^2}{2 \sqrt{\beta_{k,j}^2} }\\
& \quad + \ds \sum_{k=1}^{K} \dfrac{\lambda_1^2}{4} \sqrt{\dfrac{4 \beta^T_{G_k}\beta_{G_k} }{\lambda_1^2} }  +  \dfrac{\lambda_2^2}{4}  \ds \sum_{k=1}^{K} \sum_{j=1}^{m_k} \sqrt{\dfrac{4 \beta_{k,j}^2 }{\lambda_2^2} } \Bigg \}\\ 
& = \arg \min_{\beta \in \real^p} \; \; (y - X\beta)^T(y - X\beta) + \lambda_1 \ds \sum_{k=1}^{K} \sqrt{ \beta^T_{G_k} \beta_{G_k} } + \lambda_2 \ds \sum_{k=1}^{K} \sum_{j=1}^{m_k} |\beta_{k,j} |\,,
\end{align*}
which corresponds to the sparse group lasso solutions. Thus a reasonable starting value for  is $\beta_0$ being the sparse group lasso estimate, $\tau^2_{0,k} = 2\sqrt{\beta^T_{0,G_k} \beta_{0,G_k} }/\lambda_1$ and $\gamma^2_{0,k} = 2 |\beta_{0,k,j} |/ \lambda_2$.


\bibliographystyle{apalike}
\bibliography{mcref}
\end{document}